\documentclass[11pt,reqno,a4paper]{amsart}
\usepackage[utf8]{inputenc}
\usepackage[T1]{fontenc}
\usepackage{lmodern}
\usepackage{amsmath}
\usepackage{amssymb}
\usepackage{amsfonts}
\usepackage{mathrsfs}
\usepackage{amsthm}
\usepackage{commath}
\usepackage{hyperref}
\usepackage{doi}
\usepackage{bigints}
\usepackage[dvipsnames]{xcolor}
\usepackage{dsfont}
\usepackage{graphicx}
\usepackage{enumitem}
\usepackage{import}
\usepackage{chngcntr}

\hypersetup{
	colorlinks   = true,
	urlcolor     = Blue,
	linkcolor    = Blue,
	citecolor   = Maroon,
	breaklinks} 

\newcommand{\cB}{\mathcal{B}}
\newcommand{\R}{\mathbb{R}}
\newcommand{\Z}{\mathbb{Z}}

\newcommand{\E}{\mathcal{E}_{\mu,p}}

\DeclareMathOperator{\supp}{supp}
\DeclareMathOperator{\graph}{graph}
\DeclareMathOperator{\lip}{Lip}
\newcommand\restr[2]{{
		\left.\kern-\nulldelimiterspace 
		#1 
		\right|_{#2} 
}}

\newcommand{\Hr}[1]{\restr{\mathcal{H}^n}{#1}}

\newcommand{\Hn}{\mathcal{H}^n}
\newcommand{\Ho}{\mathcal{H}^1}

\DeclareMathOperator{\diam}{diam}
\DeclareMathOperator{\dist}{dist}

\newtheorem{theorem}{Theorem}[section]
\newtheorem*{theorem*}{Theorem}
\newtheorem{lemma}[theorem]{Lemma}

\newtheorem{prop}[theorem]{Proposition}
\theoremstyle{definition}
\newtheorem{definition}[theorem]{Definition}
\newtheorem*{definition*}{Definition}
\theoremstyle{remark}

\newtheorem{question}[theorem]{Question}

\numberwithin{equation}{section}
\counterwithin{figure}{section}

\title{Two examples related to conical energies}
\author[D. D\k{a}browski]{Damian D\k{a}browski}
\address{Damian D\k{a}browski\\ Departament de Matem\`atiques, Universitat Aut\`onoma de Barcelona, and Barcelona Graduate School of Mathematics (BGSMath)\newline\indent Edifici C Facultat de Ci\`encies, 08193 Bellaterra, Barcelona, Catalonia, Spain\newline\indent
	Current address: P.O. Box 35 (MaD), 40014 University of Jyväskylä, Finland}
\email{damian.m.dabrowski ``at'' jyu.fi}

\begin{document}
		\begin{abstract}
		In a recent article \cite{dabrowski2020cones} we introduced and studied \emph{conical energies}. We used them to prove three results: a characterization of rectifiable measures, a characterization of sets with big pieces of Lipschitz graphs, and a sufficient condition for boundedness of nice singular integral operators. In this note we give two examples related to sharpness of these results. One of them is due to Joyce and M\"{o}rters \cite{joyce2000set}, the other is new and could be of independent interest as an example of a relatively ugly set containing big pieces of Lipschitz graphs.
	\end{abstract}
	\keywords{cone, conical density, quantitative rectifiability, big pieces of Lipschitz graphs, singular integral operators}
	\subjclass{28A75 (Primary) 28A78, 42B20 (Secondary)}
	\maketitle
	\section{Introduction}
	In \cite{dabrowski2020cones} we introduced \emph{conical energies}. Let us recall their definition. Given $x\in\R^d,\,\alpha\in(0,\pi/2)\,$ and an $m$-dimensional plane $V\in G(d,m)$, set
	\begin{equation*}
		K(x,V,\alpha) = \{y\in\R^d\, :\, \dist(y,V+x) < \sin(\alpha)|y-x| \}.
	\end{equation*}
	In other words, $K(x,V,\alpha)$ denotes the open cone centered at $x$ with direction $V$ and aperture $\alpha$.\footnote{In \cite{dabrowski2020cones} we defined cones slightly differently, with ``$\sin(\alpha)$'' replaced by ``$\alpha$''. Obviously one can pass between the two definitions easily, and the current definition is more natural in $\R^2$.} The truncated cone $K(x,V,\alpha)\cap B(x,r)$ will be denoted by $K(x,V,\alpha,r)$.
	\begin{definition}
		Suppose $\mu$ is a Radon measure on $\R^d$, and $x\in\supp\mu$. Let $V\in G(d,d-n),\ \alpha\in(0,\pi/2),\ 1\le p <\infty$ and $R>0$. We define the \emph{$(V,\alpha,p)$-conical energy of $\mu$ at $x$ up to scale $R$} as
		\begin{equation*}
			\E(x,V,\alpha,R) = \int_0^{R}\bigg(\frac{\mu(K(x,V,\alpha,r))}{r^n} \bigg)^p\ \frac{dr}{r}.
		\end{equation*}
		For $E\subset\R^d$ we set also $\mathcal{E}_{E,p}(x,V,\alpha,R)=\mathcal{E}_{\Hr{E},p}(x,V,\alpha,R)$.
	\end{definition}
	Note that the definition above depends on the dimension parameter $n$, so to be more precise one could say that $\E(x,V,\alpha,R)$ is the $n$-dimensional $(V,\alpha,p)$-conical energy. For the sake of simplicity, for the rest of the introduction we will consider $n$ to be fixed, and we will not point out this dependence. The same applies to other definitions. Throughout the paper we will actually work with $n=1$.

	For $p=1$ the conical energies were first considered in \cite{chang2017analytic} where the authors used them to prove an inequality involving analytic capacity and projections. For $p>1$ they were defined in \cite{dabrowski2020cones}. A related quantity was also independently introduced in \cite{badger2020radon} -- Badger and Naples used their \emph{conical defect} to characterize measures concentrated on a countable union of Lipschitz graphs.
	
	In \cite{dabrowski2020cones} we used the conical energies to prove three results: a characterization of rectifiable measures, a characterization of sets with big pieces of Lipschitz graphs, and a sufficient condition for boundedness of nice singular integral operators. Below we briefly describe the last two theorems. The aim of this note is to give two examples related to sharpness of these results. For more information on conical energies, as well as a full presentation of results obtained in \cite{dabrowski2020cones}, we refer the reader to the original paper. 
	
	\subsection{Big pieces of Lipschitz graphs}
	We begin by recalling some definitions.
	\begin{definition}
		We say that a Radon measure $\mu$ is \emph{$n$-Ahlfors-David regular} (abbreviated as $n$-ADR) if there exists a constant $C>0$ such that for all $x\in E$ and $0<r<\diam(E)$
		\begin{equation*}
			C^{-1}\, r^n\le \mu(B(x,r))\le C\, r^n.
		\end{equation*}
		The constant $C$ will be referred to as the ADR constant of $\mu$. Furthermore, we say that an $\mathcal{H}^n$-measurable set $E$ is $n$-ADR if $\Hr{E}$ is $n$-ADR.
	\end{definition}
	
	\begin{definition}\label{def:BPLG}
		We say that an $n$-ADR set $E\subset\R^d$ has \emph{big pieces of Lipschitz graphs} (BPLG) if there exist constants $\kappa, L>0,$ such that the following holds.
		
		For all balls $B$ centered at $E$, $0<r(B)<\diam(E),$ there exists an $n$-dimensional Lipschitz graph $\Gamma$ with $\text{Lip}(\Gamma)\le L$, such that
		\begin{equation*}\label{eq:kappa big piece}
			\Hn(E\cap B\cap \Gamma)\ge \kappa\, r(B)^n.
		\end{equation*}
	\end{definition}
	Sets with BPLG were studied in \cite{david1991singular,david1993analysis} as a potential quantitative counterpart of rectifiability. Few characterizations of such sets are available. In \cite{david1993quantitative} they were characterised in terms of the \emph{big projections} property and the \emph{weak geometric lemma}, in \cite{martikainen2018characterising} -- using $L^2$ norms of projections, and very recently in \cite{orponen2020plenty} using the \emph{plenty of big projections} property. In \cite{dabrowski2020cones} we characterize the sets with BPLG using the conical energy. More precisely, we show that containing BPLG is equivalent to the following property.
	\begin{definition}
		Let $1\le p<\infty$. We say that a measure $\mu$ has \emph{big pieces of bounded energy for $p$}, abbreviated as BPBE($p$), if there exist constants $\alpha,\kappa, M_0>0$ such that the following holds. 
		
		For all balls $B$ centered at $\supp \mu$, $0<r(B)<\diam(\supp\mu),$ there exist a set $G_B\subset B$ with $\mu(G_B)\ge\kappa\, \mu(B)$, and a direction $V_B\in G(d,d-n)$, such that for all $x\in G_B$
		\begin{equation}\label{eq:BPLG condition in thm}
			\mathcal{E}_{\mu,p}(x,V_B,\alpha,r(B))=\int_0^{r(B)} \bigg(\frac{\mu(K(x,V_B,\alpha,r))}{r^n} \bigg)^{p}\ \frac{dr}{r}\le M_0.
		\end{equation}
	\end{definition}	
	\begin{theorem}[{\cite[Theorem 1.11]{dabrowski2020cones}}]\label{thm:suff BPLG}
		Let $1\le p <\infty$. Suppose $E\subset\R^d$ is $n$-ADR. Then $E$ has BPLG if and only if $\Hr{E}$ has BPBE($p$).
	\end{theorem}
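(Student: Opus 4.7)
The statement is an equivalence, which I would treat as two separate implications. A preliminary observation: the ADR upper bound gives $\mu(K(x,V,\alpha,r))/r^n \le C$ uniformly, so $(\mu(K)/r^n)^p \le C^{p-1}\mu(K)/r^n$ and hence BPBE$(1)$ implies BPBE$(p)$ for every $p\ge 1$. Thus for the forward implication it suffices to prove BPLG $\Rightarrow$ BPBE$(1)$; the converse BPBE$(p)\Rightarrow$ BPLG must be handled for each fixed $p$ separately, since the weight $dr/r$ has infinite mass and precludes a direct H\"older reduction.

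For BPLG $\Rightarrow$ BPBE$(1)$, fix a ball $B$ and let $\Gamma = \graph(f)$ be the Lipschitz graph of an $L$-Lipschitz $f:V_0\to V_0^\perp$ with $\Hn(E\cap B\cap\Gamma)\ge\kappa r(B)^n$. Choose $V_B := V_0^\perp$ and $\alpha$ with $\tan\alpha < 1/L$; this forces $K(x,V_B,\alpha)\cap\Gamma=\{x\}$ for every $x\in\Gamma$, so that $\mu(K(x,V_B,\alpha,r))=\Hr{E\setminus\Gamma}(K(x,V_B,\alpha,r))$ for such $x$. Fubini then gives
\[
\int_{E\cap B\cap\Gamma}\mathcal{E}_{\mu,1}(x,V_B,\alpha,r(B))\,d\Hn(x) \le \int_{(E\setminus\Gamma)\cap 2B}\int_0^{r(B)} \frac{\Hn(\Gamma\cap K(y,V_B,\alpha,r))}{r^{n+1}}\,dr\,d\Hn(y),
\]
using the symmetry $x\in K(y,V,\alpha)\iff y\in K(x,V,\alpha)$. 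A direct Lipschitz-graph computation bounds $\Hn(\Gamma\cap K(y,V_B,\alpha,r))\lesssim_{L,\alpha}\min(r,\dist(y,\Gamma))^n$, and the logarithmic divergence of the inner integral is cut off both from below at $r=\dist(y,\Gamma)$ and from above at $r\sim|f(y_{V_0})-y_{V_0^\perp}|$; the two cutoffs are comparable on a Lipschitz graph, so the inner integral is bounded by a constant $C(L,\alpha,n)$. The ADR of $E$ then controls the double integral by $Cr(B)^n$, and Chebyshev extracts $G_B\subset E\cap B\cap\Gamma$ with $\Hn(G_B)\ge \kappa r(B)^n/2$ on which $\mathcal{E}_{\mu,1}\le 2C/\kappa$.

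For the harder direction BPBE$(p)\Rightarrow$ BPLG, fix $B$ and let $G_B$, $V_B$ be as in the definition. The plan is to extract a subset $G_B'\subset G_B$ with $\mu(G_B')\ge \kappa'\mu(B)$ on which the \emph{uniform, all-scale} cone condition $\mu(K(x,V_B,\alpha,r))\le \varepsilon r^n$ holds for every $0<r\le r(B)$, with $\varepsilon>0$ as small as we wish. To do so, note that the energy bound $\E(x,V_B,\alpha,r(B))\le M_0$ forces the logarithmic measure of ``bad scales'' $\{r:\mu(K(x,V_B,\alpha,r))>\varepsilon r^n\}$ to be at most $M_0\varepsilon^{-p}$ at every $x\in G_B$; a Vitali/stopping-time selection among the resulting bad balls peels off only a controlled fraction of $\mu(G_B)$, yielding $G_B'$ satisfying the uniform cone condition. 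At this point $G_B'$ lies on an $n$-dimensional Lipschitz graph with normal close to $V_B$ and constant controlled by $\alpha$ and $\varepsilon$, via a classical cone-to-graph argument in the style of David--Semmes; alternatively one can verify that $G_B'$ has plenty of big projections and invoke~\cite{orponen2020plenty}.

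The main obstacle is this converse direction: converting the integral conical-energy control into a pointwise cone bound at all scales simultaneously, without losing more than a small fraction of the mass of $G_B$. The parameters $\varepsilon$, $\alpha$, and the eventual Lipschitz constant must all be balanced carefully in the stopping-time argument, and — because of the $p$-th power in the definition of $\E$ — the dependence on $p$ enters nontrivially into the admissible choice of $\varepsilon$, so the construction cannot simply be reduced to the case $p=1$.
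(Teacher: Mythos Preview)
This theorem is not proved in the present paper: it is merely quoted from \cite[Theorem~1.11]{dabrowski2020cones}, and no argument for it appears here. There is therefore no ``paper's own proof'' against which to compare your sketch.

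As to the sketch itself: the forward direction BPLG $\Rightarrow$ BPBE$(1)$ is essentially correct. The Fubini swap works, and the key geometric fact is that $\Hn(\Gamma\cap K(y,V_B,\alpha))\lesssim_{L,\alpha}\dist(y,\Gamma)^n$ (the intersection is actually empty for $r<\dist(y,\Gamma)$, so your ``lower cutoff'' is simply that the integrand vanishes there). This makes the inner integral uniformly bounded; ADR of $E$ then controls the outer integral by $r(B)^n$, and Chebyshev finishes.

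The converse BPBE$(p)\Rightarrow$ BPLG is where the content lies, and your outline does not supply it. The sentence ``a Vitali/stopping-time selection among the resulting bad balls peels off only a controlled fraction of $\mu(G_B)$'' is precisely the difficult step, and the bounded-logarithmic-measure observation alone does not justify it. A naive Vitali covering by maximal bad balls $B(x,R(x))$ yields only $\mu(A)\lesssim \varepsilon^{-1}\mu(B)$, which is useless for small $\varepsilon$; and knowing that each point has few bad scales does not by itself produce a large subset with \emph{no} bad scales, since different points may have disjoint sets of bad scales. The actual argument in \cite{dabrowski2020cones} uses a more careful stopping-time construction on David--Christ cubes together with a packing estimate driven by the integrated energy bound. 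Your suggested alternative via \cite{orponen2020plenty} is also not immediate, since you would still need to verify the plenty-of-big-projections hypothesis from BPBE$(p)$. So as written, your converse is an outline of the correct strategy rather than a proof.
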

	It seemed to us rather natural to consider also the following property.
\begin{definition}\label{def:BME}
	Let $1\le p<\infty$. We say that a measure $\mu$ has \emph{bounded mean energy for $p$} (BME($p$)) if there exist constants $\alpha, M_0>0$, and for every $x\in \supp\mu$ there exists a direction $V_x\in G(d,d-n)$, such that the following holds. 
	
	For all balls $B$ centered at $\supp \mu$, $0<r(B)<\diam(\supp\mu),$ we have 
	\begin{multline*}
		\int_{B}\mathcal{E}_{\mu,p}(x,V_x,\alpha,r(B))\ d\mu(x)\\
		= \int_{B}\int_0^{r(B)} \bigg(\frac{\mu(K(x,V_x,\alpha,r))}{r^n} \bigg)^p\ \frac{dr}{r}d\mu(x) \le M_0\, \mu(B).
	\end{multline*}
\end{definition}
In other words we require $\mu(K(x,V_x,\alpha,r))^pr^{-np} \frac{dr}{r}d\mu(x)$ to be a Carleson measure. This condition looks quite natural due to many similar characterizations of so-called uniformly rectifiable sets, e.g. the geometric lemma of \cite{david1991singular, david1993analysis} or the results from \cite{tolsa2008uniform, tolsa2012mass}. In this paper we won't need the definition of uniform rectifiability, but let us note that all sets with BPLG are uniformly rectifiable, and that the BPLG condition is strictly stronger than uniform rectifiability. 

It is easy to show, using the compactness of $G(d,d-n)$ and Chebyshev's inequality, that BME($p$) implies BPBE($p$). However, the reverse implication does not hold. In Section \ref{sec:BPLG example} we construct the appropriate example.
\begin{theorem}
	There exists a $1$-ADR set $E\subset\R^2$ that contains big pieces of $1$-Lipschitz graphs, but it does not satisfy the BME($p$) condition for any $p\ge 1$.
\end{theorem}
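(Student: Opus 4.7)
The plan is to construct $E$ as the union $I \cup F$, where $I = [0,1] \times \{0\}$ is the horizontal unit interval (supplying BPLG at the large scale) and $F = \bigcup_k F_k$ is a multiscale family of small line segments grouped by scale $r_k \to 0$ and oriented along a sequence of directions $\{\theta_k\}$ dense in $[0,\pi)$ and revisiting each angle infinitely often. The set $F$ is designed so that, at many points of $E$ and at many scales, segments appear in directions densely covering $[0,\pi)$, preventing any pointwise choice of transverse directions $V_x$ from bounding the conical energy on average. The parameters $(r_k, \ell_k, \#F_k)$ will be adjusted to achieve the $1$-ADR upper bound via $\sum_k \mathcal{H}^1(F_k) < \infty$, while retaining enough structure to defeat BME.

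For BPLG I would argue as follows. At every ball $B = B(x,r)$ centered at $E$ with $r < \diam(E)$, I would produce a $1$-Lipschitz graph $\Gamma$ and a piece of $E \cap B \cap \Gamma$ of $\mathcal{H}^1$-mass $\gtrsim r$. Balls that substantially reach $I$ are handled by $\Gamma = I$; small balls centered around a dominant scale-$k$ segment $S \in F_k$ are handled by taking $\Gamma$ = the line carrying $S$. Provided the segments of $F$ are spaced so that at every scale either $I$ or a single segment dominates locally, BPLG follows with constants depending only on the construction parameters. The lower $1$-ADR bound follows from the fact that every point lies on either $I$ or a segment of $F$ that is itself $1$-regular down to its own length scale.

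For the failure of BME, given any $\alpha > 0$, $M_0 > 0$, and any measurable choice $x \mapsto V_x \in G(2,1)$, I would exhibit a ball $B$ such that
\[
\int_B \mathcal{E}_{E,p}(x, V_x, \alpha, r(B)) \, d\mathcal{H}^1(x) > M_0 \, \mathcal{H}^1(E \cap B).
\]
The mechanism: for each $x$ and each candidate $V_x$, the density of $\{\theta_k\}$ guarantees infinitely many scales $k$ with $|\theta_k - V_x| < \alpha$. At each such aligned scale, a scale-$k$ segment close to $x$ contributes a positive amount to the integrand of $\mathcal{E}_{E,p}(x, V_x, \alpha, r(B))$ (of order $\log(\ell_k \tan\alpha / \mathrm{dist}(x, S_k))$). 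A Borel--Cantelli-style argument, applied along the aligned subsequence and to the indicator events ``$x$ is close enough to a scale-$k$ segment of $F_k$'', forces $\mathcal{E}_{E,p}(x, V_x, \alpha, 1) = +\infty$ on a positive-measure subset of $I \cap B$, regardless of the choice of $V_x$.

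The main technical obstacle is reconciling this Borel--Cantelli divergence with the ADR upper bound. The latter forces $\sum_k \mathcal{H}^1(F_k) < \infty$, so a uniform spread of scale-$k$ segments across $I$ (which would yield $\mathcal{H}^1(F_k) \asymp 1$ at each scale) is incompatible with ADR, and by Fubini the naive divergence is blocked for almost every fixed $V$. The resolution is to arrange segments in a correlated, clustered way: at each scale $r_k$, place scale-$k$ segments only on a carefully chosen subset of $I$, but revisit every direction $\theta$ at many scales so that the cumulative contribution from the scales aligned with any given $V$ still diverges on a positive-measure subset of $E$. Carrying this out requires a delicate combinatorial/geometric design and a scale-by-scale accounting of the conical energy, and this is where the substance of the proof lies.
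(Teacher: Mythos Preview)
Your proposed construction has a genuine gap, and the obstacle you identify (ADR versus Borel--Cantelli divergence) is in fact fatal to the approach rather than merely technical. Here is the concrete problem. For $x\in I$ take $V_x$ to be the vertical direction. Since $\measuredangle(V_x,I)>\alpha$, the interval $I$ contributes nothing to $\mathcal{E}_{E,p}(x,V_x,\alpha,R)$, and a Fubini computation gives, for any ball $B$ of radius $R$,
\[
\int_{B\cap I}\mathcal{E}_{E,1}(x,V_x,\alpha,R)\,d\Ho(x)
=\int_{F}\int_{B\cap I\cap K(y,V_x,\alpha)\cap B(y,R)}\frac{d\Ho(x)}{|x-y|}\,d\Ho(y)
\lesssim \alpha\,\Ho(F\cap 2B),
\]
because for each $y\in F$ at height $h$ above $I$ the set $I\cap K(y,V_x,\alpha)$ has length $\lesssim h\tan\alpha$ while $|x-y|\ge h$ on it. By the ADR upper bound the right-hand side is $\lesssim \alpha R\approx \Ho(E\cap B)$. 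For $x\in F$ one argues similarly with $V_x$ perpendicular to the segment carrying $x$. Hence your set \emph{does} satisfy BME, no matter how the segments in $F$ are clustered or correlated. The underlying reason is scale-invariant: auxiliary mass of total length $\ell$ inside a ball of radius $r$ can contribute at most $O_\alpha(\ell)$ to the integrated conical energy for any fixed transverse direction, and ADR forces $\ell\lesssim r$. Clustering does not help, because the same inequality holds in every ball.

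The paper avoids this by not using auxiliary mass at all. The key idea is a single $1$-Lipschitz graph $\Gamma=\graph(g)$ built as a sum of $M\approx\alpha^{-1}$ triangle waves at well-separated frequencies, so that near a large set of points the apparent tangent direction of $\Gamma$ \emph{itself} rotates through all of $[0,\pi/4]$ as one zooms through $\approx MN$ dyadic scales. For any $\theta\in[0,\pi/4]$ there is then a band of $\approx N$ dyadic scales at which $\Gamma$ is nearly a line of slope $\tan\theta$, forcing $\mathcal{E}_{\Gamma,p}(x,\theta,\alpha,1)\gtrsim N$. Three further rotated, rescaled layers of $\Gamma$ are stacked on top to cover the remaining directions $[\pi/4,\pi)$; the resulting set $E_N$ is still $1$-ADR with absolute constants because it is a union of only four bilipschitz curves. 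A disjoint union of such $E_N(\alpha_k)$ with $N\to\infty,\ \alpha_k\to 0$ then fails BME for every $\alpha$ and $M_0$. The essential idea missing from your proposal is this ``self-energy'' mechanism: the mass forcing large conical energy must be the backbone set itself, reused across all directions by letting its tangent vary with scale, rather than added externally.
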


The problem with BME is the following. Contrary to the aforementioned characterizations of uniform rectifiability, in BME the ``approximating'' plane $V_x$ is fixed for every $x\in\supp\mu$ once and for all, and we do not allow it to change between different scales. As shown by our example, this is too rigid.
\begin{question}
	Suppose one modifies the definition of BME, allowing the planes $V_x$ to depend on $r$, perhaps with some additional control on the oscillation of $V_{x,r}$. Can the modified BME be used to characterize BPLG, or uniform rectifiability?
\end{question}

	\subsection{Boundedness of SIOs}
	We consider singular integral operators of convolution type, with odd $C^2$ kernels $k:\R^d\setminus\{0\}\to\R$ satisfying for some constant $C_k>0$
	\begin{equation}\label{eq:calderon zygmund constant}
		|\nabla^j k(x)|\le \frac{C_k}{|x|^{n+j}}\quad \text{for $x\neq 0$}\quad \text{and}\quad j\in\{0,1,2\}.
	\end{equation}
	We will denote the class of all such kernels by $\mathcal{K}^n(\R^d)$. Note that these kernels are particularly nice examples of Calder\'{o}n-Zygmund kernels.
	
	\begin{definition}\label{def:SIO}
		Given a kernel $k\in\mathcal{K}^n(\R^d)$, a constant $\varepsilon>0$, a Radon measure $\mu$ and a function $f\in L^1_{loc}(\mu)$ we define
		\begin{equation*}
			T_{\mu,\varepsilon}f(x) = \int_{|x-y|>\varepsilon}k(y-x)\, f(y)\, d\mu(y),\quad x\in\R^d.
		\end{equation*}
		We say that $T_{\mu}$ is bounded in $L^2(\mu)$ if all $T_{\mu,\varepsilon}$ are bounded in $L^2(\mu)$, uniformly in $\varepsilon>0$.
	\end{definition}
	In their seminal work \cite{david1991singular} David and Semmes showed that, for an $n$-ADR set, the $L^2$ boundedness of all singular integral operators with smooth and odd kernels is equivalent to uniform rectifiability. Later on Tolsa \cite{tolsa2008uniform} improved on this by showing that uniform rectifiability is equivalent to the $L^2$ boundedness of all SIOs with kernels in $\mathcal{K}^n(\R^d)$.
	
	The situation in the non-ADR setting is less clear. A necessary condition for the boundedness of SIOs in $L^2(\mu)$, where $\mu$ is Radon and non-atomic, is the \emph{polynomial growth condition}:
	\begin{equation}\label{eq:growth condition}
		\mu(B(x,r))\le C_1\, r^n,
	\end{equation}
	see \cite[Proposition 1.4 in Part III]{david1991wavelets}. Some sufficient conditions for boundedness of nice SIOs have been shown in \cite{azzam2015characterization} and \cite{girela-sarrion2018}. In \cite{dabrowski2020cones} we showed that BPBE($2$) is another sufficient condition.	
	\begin{theorem}[{\cite[Theorem 1.17]{dabrowski2020cones}}]\label{thm:SIO theorem}
		Let $\mu$ be a Radon measure on $\R^d$ satisfying the polynomial growth condition \eqref{eq:growth condition}.	
		Suppose that $\mu$ has BPBE(2).
		Then, all singular integral operators $T_{\mu}$ with kernels $k\in\mathcal{K}^n(\R^d)$ are bounded in $L^2(\mu)$, with norm depending only on $\alpha, C_1, M_0,\kappa,$ and the constant $C_k$ from \eqref{eq:calderon zygmund constant}.
	\end{theorem}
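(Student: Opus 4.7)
My strategy is to construct a set $E \subset \R^2$ that is $1$-ADR, has BPLG via a base Lipschitz graph, but violates BME($p$) by a uniform blow-up of the conical energy: I will arrange that for every $x \in E$ and every $V \in G(2,1)$, one has $\mathcal{E}_{\mu,p}(x, V, \alpha, R) = +\infty$, which rules out BME($p$) for any assignment $x \mapsto V_x$ at once. The construction starts with the horizontal segment $\Gamma = [0,1] \times \{0\}$ and decorates it, at each dyadic scale $r_k = 2^{-k}$, with small line-segment ``features'' of length $\ell_k$ placed along $\Gamma$ with spacing $s_k$, oriented in direction $V_k$. The sequence $\{V_k\} \subset G(2,1)$ is chosen equidistributed (e.g. the line through the origin at angle $k\sqrt{2}\bmod \pi$), so that every direction is within any prescribed angular window of infinitely many $V_k$.

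The verification of $1$-ADR amounts to the summability condition $\sum_k \ell_k/s_k < \infty$: after such a choice, the contribution of the decorations at scale $r_k$ to the $\mathcal{H}^1$-mass inside any ball $B$ of radius $r$ is bounded by $C r \cdot (\ell_k/s_k)$, and summing over $k$ gives a total of $C'r$, dominated by the $\sim r$ contribution of $\Gamma \cap B$. The BPLG property is immediate: in every ball $B$ centred on $E$, the base segment $\Gamma$ is a $1$-Lipschitz graph with $\mathcal{H}^1(\Gamma \cap B) \ge \kappa\, r(B)$, and the decorations contribute only a lower-order share of the measure.

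The heart of the argument is the BME failure. Fix $x \in E$ (say $x \in \Gamma$) and $V \in G(2,1)$, and split into two cases. If $V$ lies within angular distance $\alpha$ of the horizontal, then $K(x, V, \alpha, r)$ already captures a piece of $\Gamma$ of length comparable to $r$ at every $r \in (0, R)$, so $\mu(K)/r \gtrsim 1$ and $\mathcal{E}_{\mu,p}(x, V, \alpha, R) = +\infty$. Otherwise $V$ makes an angle $\psi$ with the horizontal with $|\sin\psi|$ bounded below; by equidistribution of $\{V_k\}$, infinitely many $k$ have $V_k$ within $\alpha/2$ of $V$. For each such $k$, a short geometric computation (decomposing $|(y-x)_{\perp V}| = d|\sin\psi|$ for $y \in \Gamma$ with $|y-x|=d$) shows that any feature in direction $V_k$ based at such a $y$ with $d \le \ell_k \sin\alpha/|\sin\psi|$ contributes a length $\sim \ell_k$ to $K(x, V, \alpha, r)$ once $r \ge \ell_k$, yielding a term of order one in $\mathcal{E}_{\mu,p}(x, V, \alpha, R)$. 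Summing over the infinitely many such $k$ forces $\mathcal{E}_{\mu,p}(x, V, \alpha, R) = +\infty$.

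The main obstacle is calibrating the parameters $\ell_k, s_k$: $1$-ADR wants $\sum_k \ell_k/s_k$ finite, while visibility of a feature from a typical $x \in \Gamma$ demands $s_k \lesssim \ell_k \sin\alpha/|\sin\psi|$, i.e.\ $\ell_k/s_k \gtrsim |\sin\psi|/\sin\alpha$ at every contributing $k$. The resolution is that at a given scale $r_k$ we only demand visibility in the single direction $V_k$ (not every direction), and the full range of directions is swept out by the equidistribution of $\{V_k\}$ across different scales. This lets the $\ell_k/s_k$ ratio vary with $k$ while remaining summable, yet still produces, for each fixed $V_x$, infinitely many contributing scales; the technical heart of the proof is checking that this trade-off can indeed be realised simultaneously.
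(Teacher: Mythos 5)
Your proposal does not address the stated theorem. Theorem \ref{thm:SIO theorem} asserts that the polynomial growth condition \eqref{eq:growth condition} together with BPBE(2) implies $L^2(\mu)$-boundedness of all singular integral operators with kernels in $\mathcal{K}^n(\R^d)$. This is quoted from \cite{dabrowski2020cones} (as Theorem 1.17 there) and is not proved in the present paper; any actual proof would require a corona-type decomposition, Carleson packing estimates, and a $Tb$-style argument for the suppressed operators, none of which appear in your sketch. What you have instead outlined is a construction aimed at the unlabelled theorem just before Section~\ref{sec:BPLG example}, namely the existence of a $1$-ADR set with BPLG that fails BME($p$) for every $p\ge 1$. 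That is a different result.

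Even read as an attempt at the BME counterexample, the plan has an internal contradiction. You want to force $\mathcal{E}_{\mu,p}(x,V,\alpha,R)=+\infty$ for \emph{every} $x\in E$ and \emph{every} direction $V$. But by Theorem \ref{thm:suff BPLG} a $1$-ADR set with BPLG must satisfy BPBE($p$): in every ball $B$ a $\kappa$-fraction of points $x$ must have $\mathcal{E}_{\mu,p}(x,V_B,\alpha,R)\le M_0$ for some direction $V_B$ and some fixed aperture $\alpha$. If the conical energy were $+\infty$ at every point in every direction (which is the quantifier you need, since smaller apertures only decrease the energy), BPBE($p$) would fail, and hence so would BPLG — directly contradicting the other half of your claim. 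The paper's Proposition \ref{prop:counterexample BPLG} sidesteps this by constructing a \emph{sequence} $E_N$ whose averaged conical energy grows like $N$: each fixed $E_N$ still has BPBE with uniform constants (it has BPLG with uniform constants, after all), but no single Carleson constant $M_0$ works along the whole sequence. The failure is quantitative and non-uniform, not the pointwise blow-up you describe. In addition, your base set $\Gamma=[0,1]\times\{0\}$ is an honest Lipschitz graph in the horizontal direction, so for $V_x$ perpendicular to $\Gamma$ the energy is in fact small rather than infinite, further undermining the claimed universal divergence.
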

	
	The result was inspired by \cite[Theorem 10.2]{chang2017analytic} where Chang and Tolsa showed an analogous result with BPBE($2$) replaced by BPBE($1$). It is easy to see that for measures satisfying the polynomial growth condition \eqref{eq:growth condition} we have
	\begin{equation*}
		\mathcal{E}_{\mu,2}(x,V,\alpha,R)\le C_1\,\mathcal{E}_{\mu,1}(x,V,\alpha,R),
	\end{equation*}
	so that BPBE($1$) implies BPBE($2$). In Section \ref{sec:Joyce Morters example} we show that the measure constructed in \cite{joyce2000set} does not satisfy the stronger condition of \cite[Theorem 10.2]{chang2017analytic}, but it trivially satisfies the assumptions of \thmref{thm:SIO theorem}.
	\begin{theorem}
		The measure constructed in \cite{joyce2000set} satisfies BPBE(2), but does not satisfy BPBE(1).
	\end{theorem}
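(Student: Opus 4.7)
The plan is to check the two conditions directly against the Joyce--Mörters construction. The set $E\subset\R^2$ built in \cite{joyce2000set} carries a measure $\mu$ with $0<\mu(\R^2)<\infty$, polynomial growth $\mu(B(x,r))\leq C_1 r$, and finite Menger curvature $c^2(\mu)<\infty$, yet $\Ho(P_V(E))=0$ for $\Ho$-a.e.\ $V\in G(2,1)$. The construction is a Cantor-type iteration whose parameters are chosen to balance these competing properties; crucially $\mu$ is \emph{not} $1$-ADR, so Theorem \ref{thm:suff BPLG} does not collapse BPBE(1) and BPBE(2). I will read off from the construction a decay sequence $\delta_k\downarrow 0$ with $\sum_k \delta_k^2<\infty$ but $\sum_k \delta_k=\infty$ that governs the generation-$k$ cone density.

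For BPBE(2), I exploit the finite Menger curvature. If $y,z\in K(x,V,\alpha,r)$ then the triangle $xyz$ has angle at $x$ bounded by $2\alpha$, which via a standard geometric computation produces a pointwise inequality bounding $(\mu(K(x,V,\alpha,r))/r)^2$ by a Menger-curvature integral over $K\times K$. Integrating in $x$ and applying Fubini gives
\[
\int_{\supp\mu}\mathcal{E}_{\mu,2}(x,V,\alpha,R)\,d\mu(x)\;\lesssim_\alpha\; c^2(\mu)+\mu(\supp\mu)\;<\;\infty,
\]
uniformly in $V$ and $R$. For any ball $B$, Chebyshev's inequality then produces $G_B\subset B$ with $\mu(G_B)\geq \mu(B)/2$ on which $\mathcal{E}_{\mu,2}(x,V_B,\alpha,r(B))\leq M_0$ for any choice of $V_B$. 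This realises the ``trivial'' BPBE(2) advertised in the introduction.

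For BPBE(1), I would verify by direct computation that the Joyce--Mörters construction satisfies the lower bound $\mu(K(x,V,\alpha,r))/r\gtrsim \delta_k$ on the dyadic window $r\approx r_k$ for \emph{every} direction $V\in G(2,1)$ and $\mu$-a.e.\ $x$. The arrangement of the $N_k$ children inside each surviving parent cell is symmetric enough that no narrow cone can systematically avoid them --- this rotational robustness is in fact what yields the vanishing projections in \cite{joyce2000set} via a Besicovitch-type averaging. Consequently,
\[
\mathcal{E}_{\mu,1}(x,V,\alpha,r(B))\;\gtrsim\;\sum_{k\geq k(r(B))}\delta_k\;=\;\infty
\]
for $\mu$-a.e.\ $x\in B$ and every $V$, so no large $G_B$ can carry a direction $V_B$ making BPBE(1) finite. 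The principal obstacle will be deriving the lower bound $\mu(K)/r\gtrsim \delta_k$ uniformly across all $V$: it must be extracted from the precise combinatorial/geometric structure of the Joyce--Mörters cells, since the estimate has to persist for every cone direction simultaneously.
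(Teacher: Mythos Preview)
Your BPBE(2) argument via Menger curvature has the implication backwards. If $y,z\in K(x,V,\alpha,r)$ lie in the same half of the double cone, the angle at $x$ is at most $2\alpha$, so $c(x,y,z)=2\sin(\theta_x)/|y-z|$ is \emph{small}; you get an upper bound on $c(x,y,z)$, not a lower bound, and hence no control of $(\mu(K)/r)^2$ in terms of a curvature integral. The line-segment test is decisive: take $\mu=\Ho\mres[0,1]$ and $V$ the direction of the segment; then $c^2(\mu)=0$ and $\mu(\supp\mu)=1$, but $\int_0^1(\mu(K(x,V,\alpha,r))/r)^2\,dr/r=\infty$ for every $x\in(0,1)$, so your displayed inequality is false. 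The paper instead reads directly out of \cite{joyce2000set} a function $\varphi$ with $\mu(B(x,r))\le 84\,\varphi(r)$ and $\int_0^{d_1}\varphi(r)^2 r^{-3}\,dr<\infty$; this gives $\int_0^\infty(\mu(B(x,r))/r)^2\,dr/r\le M_0$ for \emph{every} $x$, and then $\mathcal{E}_{\mu,2}(x,V,\alpha,R)\le M_0$ trivially for every $x,V,R$. No Chebyshev is needed. Finite curvature is in fact a \emph{consequence} of this square-density bound (via \cite{mattila1996analytic}), not a route to it.

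Your BPBE(1) argument rests on a structural misconception. In the Joyce--M\"orters construction the $m_{k+1}$ children of a generation-$k$ ball $B$ lie along a \emph{single diameter} of $B$, at angle $\sum_{j\le k+1}\alpha_j$ with the $x$-axis; there is no rotational symmetry within a generation. A narrow cone $K(x,V,\alpha,r_k)$ with $V$ roughly orthogonal to that diameter meets only $O(1)$ of the children, so your claimed lower bound $\mu(K(x,V,\alpha,r))/r\gtrsim\delta_k$ at scale $r\approx r_k$ fails for most directions $V$. The rotation happens \emph{across} generations: the diameter direction turns by $\alpha_{k+1}=2^{-N}\pi$ (for $2^N\le k+1<2^{N+1}$) at each step. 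The paper exploits this by calling $k$ a \emph{good index} for the fixed direction $\theta$ when $\sum_{j\le k}\alpha_j$ is within $\alpha/16$ of $\theta$ modulo $\pi$. Once $2^{-N}\pi\ll\alpha$, a proportion $\gtrsim\alpha$ of indices in $[2^N,2^{N+1})$ are good; at each good $k$ the children of $B_k$ lie inside $K(x,\theta,\alpha,2r_k)$, giving $\mu(K)/r_k\gtrsim\mu(B_k)/r_k\gtrsim 1/k$. Summing over good indices yields $\sum_N\sum_{\text{good }k\in[2^N,2^{N+1})}1/k\gtrsim\sum_N\alpha=\infty$. So the divergence comes not from every scale contributing, but from enough scales aligning with any prescribed $\theta$.
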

	Hence, \thmref{thm:SIO theorem} really improves on \cite[Theorem 10.2]{chang2017analytic}. This also illustrates the following curious phenomenon: in the case of ADR measures, BPBE($p$) is equivalent to BPBE($q$) for any $p,q\ge 1$, but once we drop the AD regularity assumption, the conditions are no longer equivalent. In the polynomial growth case \eqref{eq:growth condition} we only have
	\begin{equation*}
		\text{BPBE}(p)\quad\Rightarrow\quad \text{BPBE}(q)
	\end{equation*}
	if $p\le q$.
	
	Finally, let us mention that \thmref{thm:SIO theorem} is sharp in the sense that one cannot replace BPBE(2) with BPBE($p$) for any $p>2$, see \cite[Remark 1.20]{dabrowski2020cones}. At the same time, as noted just below \cite[Remark 1.20]{dabrowski2020cones}, BPBE(2) is not a necessary condition for boundedness of nice SIOs.
	
	\subsection*{Acknowledgements} I would like to thank Xavier Tolsa for many helpful conversations, and for his feedback on the first draft of this article.
	
	I received support from the Spanish Ministry of Economy and Competitiveness, through the María de Maeztu Programme for Units of Excellence in R\&D (MDM-2014-0445), and also partial support from the Catalan Agency for Management of University and Research Grants (2017-SGR-0395), and from the Spanish Ministry of Science, Innovation and Universities (MTM-2016-77635-P).
	
	\subsection*{Notation}  We will write $f\lesssim g$ to denote $f\le C g$ for some absolute constant $C>0$. If $C$ depends on parameter $t$, we will write $f\lesssim_t g$. Moreover, $f\approx g$ denotes $f\lesssim g\lesssim f$.
	
	Throughout the article we will be only working with cones in $\R^2$, and so it will be convenient to use the following notation: given $\theta\in [0,\pi)$ let $V_{\theta}\in G(2,1)$ be the line forming angle $\theta$ with the $x$-axis, i.e.
	\begin{equation*}
		V_{\theta} = \{(x,\tan(\theta)x)\, :\, x\in\R\}
	\end{equation*}
	for $\theta\neq\pi/2$, and $V_{\pi/2}$ the vertical axis. We will write
	\begin{equation*}
		K(x,\theta,\alpha):=K(x,V_{\theta},\alpha),
	\end{equation*}
	and similarly $K(x,\theta,\alpha,r):=K(x,V_{\theta},\alpha,r)$.

	Given two lines $V_1,\ V_2$, we denote by $\measuredangle(V_1, V_2)\in [0,\pi/2]$ the angle between $V_1$ and $V_2$.
	\section{Set with BPLG but no BME}\label{sec:BPLG example}
We will show the following.
\begin{prop}\label{prop:counterexample BPLG}
	Fix an aperture parameter $\alpha\in (0,\pi/2)$. There exists a sequence of $1$-ADR sets $E_N=E_N(\alpha)\subset B(0,1)\subset\R^2,\ N\ge 100(1+\log_2(\alpha^{-1})),$ with the following properties:
	\begin{enumerate}
		\item they all contain BPLG in a uniform way, that is, they are $1$-ADR with some absolute constant $C$, and they all satisfy the BPLG condition with $L=1$ and some absolute constant $\kappa>0$.
		\item regardless of the choice of directions $\theta_x\in[0,\pi)$, for all $p\ge 1$ they have big conical energies:
		\begin{multline}\label{eq:big conical energy}
			\int_{E_N}\mathcal{E}_{E_N,p}(x,\theta_x,\alpha,1)\ d\Ho(x)\\			
			= \int_{E_N}\int_0^{1} \bigg(\frac{\Ho(K(x,\theta_x,\alpha,r)\cap E_N)}{r} \bigg)^p\ \frac{dr}{r}d\Ho(x) \gtrsim_{\alpha} N.
		\end{multline}
	\end{enumerate} 
\end{prop}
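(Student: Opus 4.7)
The plan is to construct $E_N$ as a main $1$-Lipschitz ``backbone'' $\Gamma$ (which will give BPLG) together with additional line-segment pieces at $N$ dyadic scales $r_k = 2^{-k}$, $k = 0, \ldots, N-1$. Choose $M \approx \pi/\alpha$ directions $\phi_1, \ldots, \phi_M \in [0, \pi)$ that are $\alpha$-dense, so that any aperture-$\alpha$ cone contains at least one $\phi_i$. Arrange the segments at scale $r_k$ so that for a definite-$\Ho$-mass subset $G_k \subset E_N$, each $x \in G_k$ has, for every $i \in \{1, \ldots, M\}$, a segment of $E_N$ of length $\gtrsim_\alpha r_k$ in direction $\phi_i$ passing near $x$. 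The lower bound $N \geq 100(1 + \log_2 \alpha^{-1})$ suggests a recursive layout whose range of directions doubles at each scale, so that the full $[0, \pi)$ is covered after $\log_2 \alpha^{-1}$ scales.

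Property (1) should follow by direct mass computation. For $1$-ADR, the backbone already gives $\Ho(\Gamma \cap B(x, r)) \approx r$, and the perturbations at each scale are constrained so that their contribution to $B(x, r)$ is $\lesssim r/N$ per scale, summing to $\lesssim r$ over the $N$ scales. For BPLG, in every ball $B$ centered at $E_N$, the intersection $\Gamma \cap B$ is a $1$-Lipschitz graph piece of measure $\gtrsim r(B)$.

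The main effort is establishing \eqref{eq:big conical energy}. Fix an arbitrary assignment $x \mapsto \theta_x$. For each $k \in \{0, \ldots, N-1\}$ and each $x \in G_k$, the $\alpha$-density of $\{\phi_1, \ldots, \phi_M\}$ produces some $\phi_{i(x)}$ within angle $\alpha$ of $\theta_x$; consequently the cone $K(x, V_{\theta_x}, \alpha, r)$ contains (a piece of) the $\phi_{i(x)}$-direction segment from the construction. This yields $\Ho(K(x, V_{\theta_x}, \alpha, r) \cap E_N) \gtrsim_\alpha r$ for all $r \in [r_{k+1}, r_k]$, so the integrand in \eqref{eq:big conical energy} is $\gtrsim_\alpha 1$ uniformly on $G_k \times [r_{k+1}, r_k]$. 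Integrating $dr/r$ over the dyadic window and then $d\Ho$ over $G_k$ contributes $\gtrsim_\alpha \Ho(G_k) \gtrsim_\alpha 1$ per scale; summing over $k = 0, \ldots, N-1$ yields $\gtrsim_\alpha N$, as required.

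The hard part is designing the perturbation scheme so that BPLG, $1$-ADR, and multi-scale direction diversity coexist within the total mass budget $\Ho(E_N) \approx 1$. BPLG favors a coherent Lipschitz backbone; direction diversity favors many short segments in many directions at every scale; and $1$-ADR caps the total mass. The perturbations at each scale must simultaneously fit within an $\approx 1/N$ per-scale mass budget \emph{and} remain visible to a positive-mass subset of $E_N$, essentially forcing a nested or self-similar placement and matching the logarithmic lower bound $N \gtrsim \log_2 \alpha^{-1}$ needed for the direction set to span $[0, \pi)$.
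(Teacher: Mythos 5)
There is a genuine gap: the proposed architecture cannot satisfy the constraints you yourself list, and the resolution you gesture at ("nested or self-similar placement, $\approx 1/N$ per-scale budget") is mutually inconsistent with your earlier requirement that each $x\in G_k$ sees segments of length $\gtrsim_\alpha r_k$ in $M\approx\alpha^{-1}$ directions. If the segments near $x$ at scale $r_k$ have length $\gtrsim_\alpha r_k$, then $B(x,Cr_k)$ already carries mass $\gtrsim M\cdot\alpha r_k\approx r_k$ from scale-$k$ pieces alone. Repeating this at a positive-$\Ho$-mass subset and then summing over the $N$ scales inside a fixed ball $B(x,R)$ produces total mass $\gtrsim NR$, destroying $1$-ADR with an $N$-independent constant. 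Conversely, if you cap each scale's mass at $\approx 1/N$, the cone captures only $\approx r/N$ of $E_N$, the integrand becomes $\approx 1/N$ per scale, and the $N$ scales give $\gtrsim 1$, not $\gtrsim_\alpha N$. Your proposal does not explain how to escape this dichotomy, and in fact it cannot be escaped by placing \emph{all} $M$ directions at \emph{every} scale.

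The paper's construction resolves this tension with a different mechanism that your sketch misses. Instead of many segments in many directions at each scale, it builds a single piecewise-linear Lipschitz graph $g=\sum_{j=1}^{M}f_j$, where $f_j$ is a triangle wave of amplitude $M^{-1}2^{-jN}$ and period $\approx 2^{-jN}$. On the "good" intervals all slopes add constructively, so at scales comparable to $2^{-jN}$ the graph locally looks like a line of direction $\arctan(jM^{-1})$, and this direction persists over a window of roughly $N$ consecutive dyadic scales (between $\approx\alpha^{-1}2^{-N(j+1)}$ and $\approx 2^{-jN}$). For a given $\theta\in[0,\pi/4]$ one chooses the $j$ matching $\theta$ to within $\alpha$, and the conical energy picks up $\gtrsim 1$ at each of those $N$ scales; the window, and hence the scales doing the work, depends on $\theta$. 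This reuses the same $\approx 1$ total mass for all directions, so ADR is untouched. Finally, four rotated-by-$\pi/4$ and rescaled layers of this graph, glued along segments on which $g'=1$, extend the covered directions from $[0,\pi/4]$ to all of $[0,\pi)$; this is what the "three layers on top of the big graph" structure achieves. Also, your reading of $N\ge 100(1+\log_2\alpha^{-1})$ as governing a direction set that doubles per scale is not what happens: that bound is only there to ensure the window $[\alpha^{-1}2^{-N(j+1)+10},2^{-Nj-3}]$ is nonempty and contains $\gtrsim N$ dyadic scales after subtracting the $\log_2\alpha^{-1}$ lost at the edges.
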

For the sake of clarity, we will only prove \eqref{eq:big conical energy} for $p=1$ -- the proof for other $p$ is virtually the same. More precisely, to show \eqref{eq:big conical energy} we find a large subset of $E_N$ (with length depending on $\alpha$) such that for any $x$ in the subset and any direction $\theta$ we have $\Ho(K(x,\theta,\alpha,r)\cap E_N)/r\gtrsim 1$ at $\approx N$ distinct dyadic scales $0<r<1$. Thus, $(\Ho(K(x,\theta,\alpha,r)\cap E_N)/r)^p\gtrsim 1$ at the same dyadic scales, which gives \eqref{eq:big conical energy} for arbitrary $p\ge 1$. See the proofs of \lemref{lem:big energy on the bad set} and \lemref{lem:dyadized lots of Gamma in cone}.

Let $\alpha_k\to 0$. Now, a disjoint union of appropriately rescaled sets $E_N(\alpha_{k}),$ with $k,N\to\infty$, would contain BPLG and would not satisfy the BME($p$) condition (Definition \ref{def:BME}) for any $M_0$ and $\alpha>0$. We omit the details.

Without loss of generality we will assume that $\alpha>0$ is smaller than some absolute constant, which is smaller than $\pi/100$, say (note that taking smaller $\alpha$ makes \eqref{eq:big conical energy} more difficult to prove). Let $M=100 \lceil \alpha^{-1}\rceil$, so that $M\approx \alpha^{-1}$. In the lemma below we construct a Lipschitz graph $\Gamma=\Gamma(N,M)$ that can be seen as the first approximation of the set $E_N$. For all directions $\theta$ in $[0,\pi/4]$ the conical energy $\mathcal{E}_{\Gamma,1}(x,V,\alpha,1)$ is bigger than $N$ for all $x$ belonging to a neighbourhood of a large portion of $\Gamma$. Rescaled and rotated copies of $\Gamma$ will be then used as building blocks in the construction of $E_N$. 

Let $\Delta$ be the usual dyadic grid of open intervals on $(-1,1)$, and let $\Delta_k$ denote the dyadic intervals of length $2^{-k}$. 

\begin{lemma}\label{lem:Gamma from EN}
	Let $N\ge 100(1+\log_2(\alpha^{-1}))$ be an integer. There exists a piecewise linear $1$-Lipschitz function $g:[-1,1]\to [-M^{-1},M^{-1}]$, and a collection of disjoint dyadic intervals $\mathcal{I}\subset \Delta$ with the following properties:
	\begin{enumerate}[label={(P\arabic*)}]
		\item $g(-1)=g(1)=0.$ \label{it:endpoints}
		\item For every $I\in\mathcal{I}$ we have $I\subset[-1/2,1/2]$, the function $\restr{g}{I}$ is increasing, and for $t\in I$ we have $g'(t)=1$. \label{it:increasing}
		\item \label{it:intervals}$\#\mathcal{I}=2^{-M}\, 2^{N(M+1)}$ and $\mathcal{I}\subset\Delta_{N(M+1)}$. Hence,
		\begin{equation*}
			\Ho\left(\bigcup_{I\in\mathcal{I}} I\right) = 2^{-M}\approx_{\alpha} 1.
		\end{equation*}
		\item \label{it:big energy}Let $\Gamma=\graph(g)$, $G:[-1,1]\to\Gamma$ be the graph map $G(t)=(t,g(t))$ For any $I\in\mathcal{I}$, any $x\in\R^2$ with $\dist(x,G(I))< 2^{-N(M+1)}$, and all $\theta\in [0,\pi/4]$, we have
		\begin{equation}\label{eq:big energy graph}
			\int_0^1 \frac{\Ho(K(x,\theta,\alpha,r)\cap \Gamma)}{r} \ \frac{dr}{r}\gtrsim N.
		\end{equation}
	\end{enumerate}
\end{lemma}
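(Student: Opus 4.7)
The plan is to construct $g$ as an iteratively refined piecewise linear function over $N$ stages. The guiding idea is that at each of the $N(M+1)$ dyadic scales $r \in [2^{-N(M+1)+1}, 1]$, the graph $\Gamma$ should look locally (near any point close to $G(I)$ for some $I \in \mathcal{I}$) like a line of a prescribed direction $\phi(r) \in [0, \pi/4]$. These directions cycle through a fixed $\alpha$-dense grid $\phi_0 < \phi_1 < \cdots < \phi_M$ of $[0, \pi/4]$, say $\phi_j = j\pi/(4M)$, with each $\phi_j$ visited at the $N$ dyadic scales $r_{k, j} = 2^{-k(M+1) - j}$ for $k = 0, 1, \ldots, N-1$. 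Since $M \approx 1/\alpha$, any $\theta \in [0, \pi/4]$ lies within $\alpha/2$ of some $\phi_{j^*}$.

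I would build $g$ stage by stage: set $g_0 \equiv 0$, and at stage $k \in \{1, \ldots, N\}$ define $g_k = g_{k-1} + h_k$, where $h_k$ is a piecewise linear perturbation supported on a subset of $[-1/2, 1/2]$, vanishing at the endpoints of each ``active'' subinterval (so $g(\pm 1) = 0$ is preserved) and oscillating at the $M+1$ sub-scales $r_{k, 0} > r_{k, 1} > \cdots > r_{k, M}$. The perturbation $h_k$ is arranged so that at each sub-scale $r_{k, j}$ the mean slope of $g_k$ over length-$r_{k, j}$ windows centered on any active subinterval equals $\tan \phi_j$, up to an error $O(\alpha)$. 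Amplitudes decay geometrically so that $|g| \le 1/M$ and $|g'| \le 1$. The stage-$N$ perturbation contains true slope-$1$ segments, which I define to be $\mathcal{I}$; a combinatorial count verifies $\#\mathcal{I} = 2^{N(M+1) - M}$ and $\mathcal{I} \subset \Delta_{N(M+1)}$. Properties \ref{it:endpoints}--\ref{it:intervals} are then immediate.

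For property \ref{it:big energy}, fix $I \in \mathcal{I}$, $x$ with $\dist(x, G(I)) < 2^{-N(M+1)}$, and $\theta \in [0, \pi/4]$. Pick $j^* \in \{0, \ldots, M\}$ with $|\phi_{j^*} - \theta| \le \alpha/2$. For each $k \in \{0, \ldots, N-1\}$, the construction guarantees a piece of $\Gamma$ of horizontal length $\gtrsim r_{k, j^*}$ near $x$ which is an $O(\alpha r_{k, j^*})$-perturbation of a line of direction $\phi_{j^*}$. Since the cone's half-aperture $\alpha$ exceeds both $|\phi_{j^*} - \theta|$ and the angular deviation caused by the offset $\dist(x, \Gamma) \le 2^{-N(M+1)}$ at any scale $r_{k, j^*} \ge 2 \cdot 2^{-N(M+1)}/\alpha$ (all but $O(\log_2 M) \ll N$ of the smallest $k$), the line lies inside the cone $K(x, \theta, \alpha, r_{k, j^*})$, and so does the graph piece. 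This gives $\Ho(K(x, \theta, \alpha, r_{k, j^*}) \cap \Gamma) \gtrsim r_{k, j^*}$, so the integrand is $\gtrsim 1$ on the dyadic range $[r_{k, j^*}/2, r_{k, j^*}]$. Summing $\log 2$ contributions from these $N$ disjoint dyadic ranges yields \eqref{eq:big energy graph}.

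The hard part is executing the construction of the $h_k$'s: at each of the $N(M+1)$ dyadic sub-scales we must impose a specific local direction $\phi_j$ uniformly over the active subintervals, while respecting both Lipschitz bounds and culminating in a slope-$1$ pattern at the finest scale. A nested (rather than additive) construction is the natural framework, with each sub-scale's perturbation confined to a subinterval of the previous sub-scale's linear segments and amplitudes decaying geometrically so the cumulative Lipschitz constant stays $\le 1$. An additional subtlety is that the stage-$N$ perturbation must attain the Lipschitz bound exactly on $\mathcal{I}$, forcing all coarser-scale amplitudes to be strictly smaller than $1$; this can be accommodated by taking sub-scale amplitudes of order $\alpha$, producing a local direction with $O(\alpha)$ error that is still compatible with the $\alpha$-wide cones used above.
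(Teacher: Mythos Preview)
Your high-level strategy matches the paper's: build $g$ so that its local direction covers an $\alpha$-dense grid of $[0,\pi/4]$, with each grid direction realized at roughly $N$ dyadic scales; then for any $\theta$, the nearby grid direction contributes $\gtrsim N$ scales to the energy. Your treatment of the offset $\dist(x,\Gamma)$ is also parallel to the paper's.

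Where you diverge is in how the directions are distributed among scales. The paper uses a \emph{monotone} scheme: $g=\sum_{j=1}^M f_j$ with $f_j$ a triangle wave of frequency $2^{jN}$ and Lipschitz constant $M^{-1}$. On the good intervals (where every $f_i'=+M^{-1}$), the partial sum $\sum_{i\le j}f_i$ has slope exactly $j/M$, so for scales $r\in[2^{-(j+1)N},2^{-jN}]$ the local direction is $\arctan(j/M)$; each direction thus occupies a contiguous block of $N$ dyadic scales, and the construction is a two-line formula. Your scheme is \emph{cyclic}: the directions $\phi_0,\ldots,\phi_M$ are visited once per stage, and $N$ stages are stacked.

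The cyclic organization creates a genuine obstruction you do not address: between consecutive stages the local slope must drop from $\tan\phi_M\approx 1$ back to $\tan\phi_0=0$ in a single dyadic step. If an active interval carries slope $\approx 1$ at scale $r_{k,M}$, then any mean-zero perturbation $h_{k+1}$ supported there must satisfy $h_{k+1}'>0$ on a set of positive measure, forcing $g'>1$ and destroying the $1$-Lipschitz bound. More quantitatively, from slope $s$ one can pass to slope $s'$ on half the interval at the next dyadic scale only if the complementary half carries slope $2s-s'\le 1$, so the slope can decrease by at most $1-s$ per step; a reset from $1-O(\alpha)$ to $0$ therefore costs $\approx M$ extra scales, which your index scheme $r_{k,j}=2^{-k(M+1)-j}$ does not allocate. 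The paper's monotone layout sidesteps this entirely: the slope only increases across scales and saturates at $1$ exactly on $\mathcal{G}_M$.

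The gap is repairable (e.g., use up--down gadgets $\phi_0\to\phi_M\to\phi_0$ of length $\approx 2M$ per stage, or simply switch to the monotone layout), but as written your sketch would not produce a $1$-Lipschitz function with the stated properties.
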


For an idea of what $\Gamma$ looks like, see the graph at the bottom of Figure \ref{fig:plots of g}. Before we prove \lemref{lem:Gamma from EN}, let us show how it can be used to prove \propref{prop:counterexample BPLG}.

\subsection{Construction of \texorpdfstring{$E_N$}{E\_N}}			
Let $\Gamma=\Gamma(M,N)$ be the $1$-Lipschitz graph from \lemref{lem:Gamma from EN}. The set $E_N$ will consist of one ``big'' Lipschitz graph $\Gamma_0=\Gamma$, and three layers of much smaller Lipschitz graphs stacked on top of the big one. The small graphs will be rescaled and rotated versions of $\Gamma$. Roughly speaking, the big graph ensures big conical energy in directions $[0,\pi/4]$, the first layer of small graphs ensures big conical energy in direction $[\pi/4,\pi/2]$, and so on.

Another way to see $E_N$ is as a union of four bilipschitz curves $\Gamma_0,\dots, \Gamma_3$, and this is how we are going to define it. If $\Gamma_i$ is already defined, $\Gamma_{i+1}$ will be constructed by replacing some of the segments comprising $\Gamma_i$ with rescaled and rotated copies of $\Gamma$.

First, let $\rho:\R^2\to\R^2$ be the counterclockwise rotation by $\pi/4$. Set $L_0=\{(x,0) \, :\, x\in\R\}$ and for $k\ge 1$ set $L_k= \rho^k (L_0)\in G(2,1)$ (here $\rho^k$ denotes $k$ compositions of $\rho$, and the same notation is used for $\delta$ defined below). 

Define also $r_k = 2^{-kN(M+1)-k/2},$ and let $\delta:\R^2\to\R^2$ be the dilation by factor $r_1,$ i.e. $\delta(x)=r_1 x$. Note that $r_k = (r_1)^k$, so that $\delta^k$ is the dilation by factor $r_k$. The constant $r_1$ was chosen in such a way that for an interval $I\in\mathcal{I}\subset\Delta_{N(M+1)}$ we have $\Ho(G(I))=2r_1$ by \ref{it:increasing} (where $G$ is the graph map of $g$).

We will abuse the notation and identify the segment $S_0:=[-1,1]\times \{0\}$ with $[-1,1]\subset\R$. 

Set $\Gamma_0=\Gamma$, and let $\gamma_0=\sigma_0:S_0\to\Gamma_0$ be defined as the natural graph map $\gamma_0(t)=\sigma_0(t)=G(t)=(t,g(t))$.
\begin{lemma} \label{lem:union of curves}
	Let $k\in\{1,2,3\}$. Denote by $\mathcal{I}^k$ the $k$-fold Cartesian product of $\mathcal{I}$, where $\mathcal{I}$ is the family of intervals from \lemref{lem:Gamma from EN}. There exist $\gamma_k:S_0\to \R^2,\ \Gamma_k:=\gamma_k(S_0)$, and for each $I=(I_1,\dots,I_k)\in\mathcal{I}^k$ there exist sets $S_{k,I},\ \Gamma_{k,I}$, and a map $G_{k,I}:\R^2\to\R^2,$ such that:
	\begin{enumerate}[label=\alph*)]
		\item $G_{k,I}:=\tau_I\circ\rho^k\circ\delta^k$ for some translation $\tau_I$, and $S_{k,I}:=G_{k,I}(S_0)$ are segments (in particular, $\Ho(S_{k,I})=2\,r_k$ and $S_{k,I}$ are parallel to $L_k$),
		\item $\Gamma_{k,I}$ are rescaled and rotated copies of $\Gamma$, with $\Gamma_{k,I} := G_{k,I}(\Gamma_0)$ (in particular, since the endpoints of $\Gamma_0$ and $S_0$ coincide, the same is true for $\Gamma_{k,I}$ and $S_{k,I}$),
		\item $\Gamma_k :=\gamma_k(S_0)$ are of the form 
		\begin{equation*}
			\Gamma_{k} = \bigg(\Gamma_{k-1}\setminus\bigcup_{I\in\mathcal{I}^{k}}S_{k,I}\bigg)\cup\bigcup_{I\in\mathcal{I}^{k}}\Gamma_{k,I},
		\end{equation*}
		\item for $k=1,\, J\in\mathcal{I},$ we have $S_{1,J}=\sigma_0(J)\subset\Gamma_0$, and for $k>1$, if $I=(I',J)\in\mathcal{I}^{k-1}\times \mathcal{I}$, then $S_{k,I}=G_{k-1,I'}(S_{1,J})\subset \Gamma_{k-1,I'}\subset \Gamma_{k-1}$,
		\item if $I=(I',J)$, $a_1, a_2$ are the endpoints of $S_{k,I}$, and $b_1, b_2$ are the endpoints of $\Gamma_{k-1,I'}$, then
		\begin{equation*}
			|a_i - b_j|\gtrsim r_{k-1}
		\end{equation*}
		for $i,j\in \{1,2\}$ (i.e. $S_{k,I}$ is ``deep inside'' $\Gamma_{k-1,I'}$),
		\item the maps $\gamma_k$ are of the form $\gamma_k=\sigma_k\circ\dots\circ\sigma_0$, where $\sigma_k:{\Gamma}_{k-1}\to{\Gamma}_k$ is defined as
		\begin{equation*}
			\sigma_k(x) = \begin{cases}
				x,\quad&\text{for}\ x\in\Gamma_{k-1}\setminus\bigcup_{I\in\mathcal{I}^k}S_{k,I}\\
				G_{k,I}(x)\circ\sigma_0\circ G_{k,I}^{-1}(x),\quad &\text{for}\ x\in S_{k,I},\ I\in\mathcal{I}^k.
			\end{cases}
		\end{equation*}				
		In particular, $\sigma_k(S_{k,I})=\Gamma_{k,I}$.
		\item $\lVert \sigma_k - id\rVert_{L^{\infty}(\Gamma_{k-1})}\le 2M^{-1}r_{k},$
	\end{enumerate}
\end{lemma}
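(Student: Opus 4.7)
The plan is to construct the data inductively on $k\in\{1,2,3\}$, at each step identifying a finite collection of line segments inside $\Gamma_{k-1}$ and replacing each of them by a scaled and rotated copy of $\Gamma$. The segments to replace come essentially for free from \lemref{lem:Gamma from EN}: on each $\Gamma_0$-piece $\Gamma_{k-1,I'}$ coming from the previous stage, $\Gamma$ itself is straight with slope $1$ precisely on the distinguished intervals $J\in\mathcal{I}$ by \ref{it:increasing}, so these pieces already contain natural ``flat'' subsegments waiting to be excised.

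For $k=1$ and $J\in\mathcal{I}$ I set $S_{1,J}:=G(J)=\sigma_0(J)$, which is a segment of length $\sqrt{2}|J|=2r_1$ parallel to $L_1=\rho(L_0)$, and take the unique orientation-preserving affine map $G_{1,J}=\tau_J\circ\rho\circ\delta$ sending $S_0$ onto it. I then set $\Gamma_{1,J}:=G_{1,J}(\Gamma_0)$ and define $\Gamma_1,\sigma_1,\gamma_1$ by the formulas in c) and f); the endpoint condition \ref{it:endpoints} forces $\Gamma_{1,J}$ and $S_{1,J}$ to share endpoints, so the replacement yields a continuous curve. For $k\ge 2$ and $I=(I',J)\in\mathcal{I}^{k-1}\times\mathcal{I}$ I set $S_{k,I}:=G_{k-1,I'}(S_{1,J})$, $G_{k,I}:=G_{k-1,I'}\circ G_{1,J}$, and proceed analogously. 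To verify the required form $G_{k,I}=\tau_I\circ\rho^k\circ\delta^k$ I use that $\rho$ and $\delta$ commute, and that $\delta^j\circ\tau_v=\tau_{r_jv}\circ\delta^j$, $\rho^j\circ\tau_v=\tau_{\rho^j v}\circ\rho^j$; expanding $(\tau_{I'}\circ\rho^{k-1}\circ\delta^{k-1})\circ(\tau_J\circ\rho\circ\delta)$ and pushing all translations outside rearranges the composition into $\tau_I\circ\rho^k\circ\delta^k$ for some translation $\tau_I$.

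Properties a), b), c), d), f) then hold by construction. For e) I pull back by $G_{k-1,I'}^{-1}$, which scales distances by $r_{k-1}^{-1}$: the endpoints of $\Gamma_{k-1,I'}$ become $(\pm 1,0)$ and those of $S_{k,I}$ become the endpoints of $S_{1,J}=G(J)$; since $J\subset[-1/2,1/2]$ by \ref{it:increasing}, the latter lie at distance $\ge 1/2$ from $(\pm 1,0)$, so rescaling back gives e). For g), a point $G_{k,I}(t,0)\in S_{k,I}$ is moved by $\sigma_k$ to $G_{k,I}(t,g(t))$; since $G_{k,I}$ scales lengths by $r_k$ and $|g|\le M^{-1}$, one obtains $|\sigma_k(x)-x|\le M^{-1}r_k\le 2M^{-1}r_k$, while $\sigma_k$ is the identity elsewhere.

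The one point that requires care is that the formula in f) actually defines $\sigma_k$ unambiguously, i.e.\ the segments $S_{k,I}$ must be pairwise disjoint and each must lie strictly inside a single $\Gamma_{k-1,I'}$. Disjointness within a fixed $I'$ reduces, via the affine bijection $G_{k-1,I'}$, to disjointness of the $S_{1,J}=G(J)$, which holds because the intervals $J\in\mathcal{I}$ are disjoint and $g$ is a function; disjointness across distinct $I'$ uses the inductively known disjointness of the $\Gamma_{k-1,I'}$, which is itself forced by property e) at the previous level (distinct pieces sit inside disjoint ``deep interiors''). Containment strictly inside $\Gamma_{k-1,I'}$ is precisely e). This bookkeeping about nested disjoint pieces is the main technical obstacle, but once traced through, the induction closes.
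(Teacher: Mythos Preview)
Your proof is correct and follows essentially the same inductive construction as the paper: define $S_{1,J}=\sigma_0(J)$, then $S_{k,I}=G_{k-1,I'}(S_{1,J})$, read off the similarity $G_{k,I}$, and verify e) via the containment $J\subset[-1/2,1/2]$ and g) via $\|g\|_\infty\le 2M^{-1}$. You are in fact slightly more careful than the paper in two places --- explicitly checking that $G_{k-1,I'}\circ G_{1,J}$ has the form $\tau_I\circ\rho^k\circ\delta^k$ via the commutation relations, and addressing the well-definedness of $\sigma_k$ through disjointness of the $S_{k,I}$ --- both of which the paper leaves implicit.
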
		
\begin{proof}[Proof of \lemref{lem:union of curves}] 
	
	We will define $\sigma_k$ inductively. 
	
	First, for any $I\in\mathcal{I}$ set $S_{1,I}:=\sigma_0(I)\subset \Gamma_0$. Observe that by \ref{it:increasing} $S_{1,I}$ is a segment parallel to $L_1$. Moreover, since $\Ho(I)=2^{1/2}\,r_1$, we have $\Ho(S_{1,I})=2\,r_1$. It follows that $S_{1,I}=\tau_I\circ\rho\circ\delta(S_0)$ for some translation $\tau_I$. Define $G_{1,I}:\R^2\to\R^2$ as $G_{1,I}= \tau_I\circ\rho\circ\delta$, and $\Gamma_{1,I} = G_{1,I}(\Gamma_0)$.
	
	We define $\sigma_{1}:{\Gamma}_0\to\R^2$ as in f). In other words, $\restr{\sigma_1}{S_{1,I}}$ can be seen as a graph map parametrizing the Lipschitz graph $\Gamma_{1,I}$. It is very easy to see that $S_{1,I},\ \Gamma_{1,I},$ and $\sigma_1$ defined in this way satisfy all the conditions except for e) and g), which we will prove later on.
	
	Now, suppose that $\sigma_{k-1}$, $\gamma_{k-1}$, etc. have already been defined, and that they satisfy a) -- d), f).
	
	For any $I=(I',J)\in\mathcal{I}^{k-1}\times \mathcal{I}$ set $S_{k,I}:=G_{k-1,I'}(S_{1,J})\subset\Gamma_{k-1,I'}$. Since $S_{1,J}$ is parallel to $L_1$ and $G_{k-1,I'} = \tau_{I'}\circ\rho^{k-1}\circ\delta^{k-1}$, $S_{k,I}$ is a segment parallel to $L_{k}$. Moreover, since $\Ho(S_{1,J})=2 \,r_1$, we have $\Ho(S_{k,I})=2\, r_1\, r_{k-1} = 2 r_{k}$. It follows that $S_{k,I}=\tau_I\circ\rho^{k}\circ\delta^{k}(S_0)$ for some translation $\tau_I$.
	
	We define $\sigma_{k}:{\Gamma}_0\to\R^2$ as in f), so that $\restr{\sigma_{k}}{S_{k,I}}$ can be seen as a graph map parametrizing the Lipschitz graph $\Gamma_{k,I}$. It is easy to see that $\sigma_{k}, \Gamma_{k},$ etc. defined this way satisfy a) -- d), f).
	
	\emph{Proof of e).} Let $k=1$. Recall that for all $I\in\mathcal{I}$ we have $I\subset[-1/2,1/2]$ by \ref{it:increasing}. Hence, $S_{1,I} = \sigma_0(I)\subset\sigma_0([-1/2,1/2])\subset \Gamma_0$. If $x\in \sigma_0([-1/2,1/2])$ is arbitrary and if $y\in\Gamma_0$ is one of the endpoints of $\Gamma_0$, we have $|x-y|\gtrsim 1=r_0$. So e) holds for $k=1$. For $k\in\{2,3\}$ the claim follows from the fact that if $I=(I',J)\in\mathcal{I}^{k-1}\times\mathcal{I}$, then $S_{k,I} = G_{k-1,I'}(S_{1,J})$ and $\Gamma_{k-1,I'}= G_{k-1,I'}(\Gamma_0)$.
	
	\emph{Proof of g).} We have $\sigma_k=id$ on $\Gamma_{k-1}\setminus \bigcup_{I\in\mathcal{I}^k} S_{k,I}$, and for $x\in S_{k,I}$
	\begin{multline*}
		|\sigma_k(x)-x|= \big\lvert G_{k,I}\circ\sigma_0\circ G_{k,I}^{-1}(x) - G_{k,I}\circ G_{k,I}^{-1}(x)\big\rvert\\
		= r_k \big\lvert\sigma_0\circ G_{k,I}^{-1}(x) - G_{k,I}^{-1}(x)\big\rvert\le r_k\lVert g\rVert_{\infty} \le 2M^{-1} r_k,
	\end{multline*}
	where we used the fact that $\sigma_0(t)=(t,g(t))$, and that $\lVert g\rVert_{\infty} \le 2M^{-1}$ by \lemref{lem:Gamma from EN}.
\end{proof}
\begin{lemma}\label{lem:sigmak bilip}
	The maps $\gamma_k$ and $\sigma_k$ from \lemref{lem:union of curves} are bilipschitz, with bilipschitz constants independent of $N$ and $\alpha$. 
\end{lemma}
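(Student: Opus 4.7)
The plan is to induct on $k$. For the base case $k=0$, since $g$ is $1$-Lipschitz with $g(\pm 1)=0$, the map $\sigma_0(t)=(t,g(t))$ satisfies $|t-s|\le |\sigma_0(t)-\sigma_0(s)|\le \sqrt{2}\,|t-s|$, so $\gamma_0=\sigma_0$ is bilipschitz with constants in $[1,\sqrt{2}]$. For the inductive step it is enough to show that each $\sigma_k:\Gamma_{k-1}\to\Gamma_k$ is bilipschitz with absolute constants, because $\gamma_k=\sigma_k\circ\cdots\circ\sigma_0$ is then a composition of at most four uniformly bilipschitz maps.

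The factorization $\sigma_k|_{S_{k,I}}=G_{k,I}\circ\sigma_0\circ G_{k,I}^{-1}$ with $G_{k,I}$ a similarity (property a)) shows that $\sigma_k|_{S_{k,I}}$ is bilipschitz onto $\Gamma_{k,I}$ with constants in $[1,\sqrt{2}]$. The endpoint condition $g(\pm 1)=0$ gives $\sigma_k(a)=a$ at every endpoint $a$ of every $S_{k,I}$, so $\sigma_k$ is continuous on $\Gamma_{k-1}$, and by property g) we have $\lVert\sigma_k-\mathrm{id}\rVert_\infty\le 2M^{-1}r_k$. For $x,y\in\Gamma_{k-1}$: if both lie in the same $S_{k,I}$, the local bilipschitz bound suffices; if $|x-y|\ge r_k$, then the perturbation bound and the triangle inequality give $(1-4M^{-1})|x-y|\le|\sigma_k(x)-\sigma_k(y)|\le(1+4M^{-1})|x-y|$. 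The remaining case is $|x-y|<r_k$ with $x\in S_{k,I}$ and $y$ lying in a different piece, meaning either in some $S_{k,I'}$ with $I'\neq I$, or in $\Gamma_{k-1}\setminus\bigcup_{I'}S_{k,I'}$.

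In this last case, since $\gamma_{k-1}$ is bilipschitz by the inductive hypothesis, arc length along $\Gamma_{k-1}$ is comparable to Euclidean distance, and any path in $\Gamma_{k-1}$ from $x$ to $y$ must exit $S_{k,I}$ through an endpoint $a$. Additivity of arc length through $a$ and the lower bilipschitz inequality for $\gamma_{k-1}$ give $|x-y|\gtrsim |x-a|+|a-y|$. Using $\sigma_k(a)=a$ and the local Lipschitz bound on each piece, the upper estimate $|\sigma_k(x)-\sigma_k(y)|\le |\sigma_k(x)-a|+|a-\sigma_k(y)|\le \sqrt{2}\bigl(|x-a|+|a-y|\bigr)\lesssim |x-y|$ is immediate. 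The matching lower bound is obtained symmetrically: $\sigma_k(x)$ and $\sigma_k(y)$ stay on opposite sides of $a$ along $\Gamma_k$, so combining the lower bilipschitz bound of $\sigma_k$ on each individual segment with the estimate $|x-y|\approx|x-a|+|a-y|$ yields $|\sigma_k(x)-\sigma_k(y)|\gtrsim |x-y|$. The main obstacle is precisely this straddling case: one must verify that the bilipschitz constants depend neither on $k$ nor on $M\approx\alpha^{-1}$, and this is achieved only because the perturbation $\lVert\sigma_k-\mathrm{id}\rVert_\infty\le 2M^{-1}r_k$ is measured against the natural scale $r_k$ of the replacement, and because $\sigma_k$ fixes the endpoints of every $S_{k,I}$ so that the error vanishes exactly where the curve is ``cut''.
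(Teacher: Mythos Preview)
Your overall strategy---induct on $k$, prove each $\sigma_k$ is bilipschitz with absolute constants, and split into cases according to whether $x,y$ lie in the same piece, in the complement, or straddle an endpoint---matches the paper's proof. The base case and the ``same piece'' and ``large distance'' cases are handled essentially as in the paper.

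The gap is in the lower bound for the straddling case. You write that ``the matching lower bound is obtained symmetrically: $\sigma_k(x)$ and $\sigma_k(y)$ stay on opposite sides of $a$ along $\Gamma_k$, so \ldots\ $|\sigma_k(x)-\sigma_k(y)|\gtrsim|x-y|$.'' But this is not symmetric to the upper bound: the triangle inequality through $a$ gives an upper estimate for free, whereas a lower estimate of the form $|\sigma_k(x)-\sigma_k(y)|\gtrsim|\sigma_k(x)-a|+|a-\sigma_k(y)|$ requires that the angle $\measuredangle(\sigma_k(x),a,\sigma_k(y))$ be bounded away from $0$. Knowing that $\sigma_k(x)$ and $\sigma_k(y)$ lie on opposite sides of $a$ \emph{along the curve} $\Gamma_k$ does not by itself give this: you would need $\gamma_k$ to be bilipschitz, which is exactly what you are proving. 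Neither the $L^\infty$ perturbation bound nor the endpoint-fixing property supplies the missing angle control.

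The paper fills this gap by a genuine geometric argument. When $y$ lies in an adjacent segment $S_{k,Y}$ sharing the endpoint $a$, both segments are parallel to $L_k$ and the displacements $\sigma_k(x)-x,\ \sigma_k(y)-y$ lie in $L_k^\perp$, so a Pythagorean identity gives $|\sigma_k(x)-\sigma_k(y)|\ge|x-y|$ directly. When $y$ lies in $\Gamma_{k-1}\setminus\bigcup S_{k,I'}$, the paper shows (using that $\Gamma_{k,I}$ is a $1$-Lipschitz graph over $L_k$ and $\Gamma_{k-1,I'}$ is a $1$-Lipschitz graph over $L_{k-1}$, with $\measuredangle(L_k,L_{k-1})=\pi/4$) that $\pi/4\le\measuredangle(\sigma_k(x),a,y)\le\pi$, and then applies the law of cosines. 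Your proposal needs an argument of this kind to close.
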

\begin{proof}
	It suffices to show that $\sigma_k$ is bilipschitz with $\lip(\sigma_k)$ and $\lip(\sigma_k^{-1})$ independent of $N,\alpha$, and then the same will be true for $\gamma_k$ by \lemref{lem:union of curves} f).
	
	Suppose that $\sigma_{j}$ are already known to be bilipschitz for $0\le j\le k-1$, with $\lip(\sigma_{j})$ and $\lip(\sigma_{j}^{-1})$ independent of $N,\alpha$ (clearly, the condition holds for $\sigma_0$). Let $x,y\in\Gamma_{k-1}$. Our aim is to show that $|\sigma_k(x)-\sigma_k(y)|\approx |x-y|$.
	
	\emph{Case 1.} $|x-y|>6M^{-1}r_k$.			
	It follows from \lemref{lem:union of curves} g) that
	\begin{equation*}
		|\sigma_k(x)-\sigma_k(y)|\le |x-y|+|\sigma_{k}(x)-x| + |\sigma_{k}(y)-y|\le |x-y| + 4M^{-1}r_k\le 2|x-y|,
	\end{equation*}
	and
	\begin{equation*}
		|\sigma_k(x)-\sigma_k(y)|\ge|x-y|-|\sigma_{k}(x)-x| - |\sigma_{k}(y)-y|\ge |x-y| - 4M^{-1}r_k\ge \frac{1}{3}|x-y|.
	\end{equation*}
	
	\emph{Case 2.} $x,y\in \Gamma_{k-1}\setminus \bigcup_{I\in\mathcal{I}^k} S_{k,I}$.			
	This case is trivial, because $|\sigma_k(x)-\sigma_k(y)|= |x-y|$. 
	
	\emph{Case 3.} $|x-y|\le 6M^{-1}r_k$, and $x,y\in \overline{S_{k,I}}$ for some $I\in\mathcal{I}^k$.  
	Using the fact that $\sigma_0$ is bilipschitz we get
	\begin{multline*}
		|\sigma_k(x)-\sigma_k(y)| = \big\lvert G_{k,I}\circ\sigma_0\circ G_{k,I}^{-1}(x) - G_{k,I}\circ\sigma_0\circ G_{k,I}^{-1}(y)\big\rvert\\
		= r_k  \big\lvert \sigma_0\circ G_{k,I}^{-1}(x) - \sigma_0\circ G_{k,I}^{-1}(y)\big\rvert \approx r_k  \big\lvert G_{k,I}^{-1}(x) - G_{k,I}^{-1}(y)\big\rvert = |x-y|.
	\end{multline*}
	\emph{Case 4.} $|x-y|\le 6M^{-1}r_k$, $x\in S_{k,I}$ for some $I\in\mathcal{I}^k$, and $y\in \Gamma_{k-1}\setminus \overline{S_{k,I}}$. 
	
	We claim that 
	\begin{equation}\label{eq:y in graph}
		y\in\Gamma_{k-1,I'},
	\end{equation}
	where $I=(I',J)\in\mathcal{I}^{k-1}\times\mathcal{I}$ and $\Gamma_{k-1,I'}$ is the Lipschitz graph containing $S_{k,I}$. Indeed, by the induction assumption, the map $\gamma_{k-1}^{-1}:\Gamma_{k-1}\to S_0$ is bilipschitz with $\lip(\gamma_{k-1}),\ \lip(\gamma_{k-1}^{-1})$ independent of $N,\alpha$. Since $\Ho(S_{k,I})=2\, r_k$ and $\Ho(\Gamma_{k-1,I'})\approx r_{k-1}$, we get that
	\begin{equation*}
		\Ho(\gamma_{k-1}^{-1}(S_{k,I}))\approx r_k\quad \text{and}\quad  \Ho(\gamma_{k-1}^{-1}(\Gamma_{k-1,I'}))\approx r_{k-1}.
	\end{equation*}
	Moreover, we have 
	\begin{equation}\label{eq:segments contained}
		\gamma_{k-1}^{-1}(S_{k,I})\subset \gamma_{k-1}^{-1}(\Gamma_{k-1,I'})\subset S_0,
	\end{equation}
	where all three sets are segments. If $a_1, a_2$ and $b_1, b_2$ are the endpoints of $\gamma_{k-1}^{-1}(S_{k,I})$ and $\gamma_{k-1}^{-1}(\Gamma_{k-1,I'}),$ respectively, then it follows from \lemref{lem:union of curves} e) and from the bilipschitz property of $\gamma_{k-1}$ that for $i,j\in\{1,2\}$ we have
	\begin{equation}\label{eq:segments deep inside}
		|a_i-b_j|\gtrsim r_{k-1}.
	\end{equation}
	Recall that $x\in S_{k,I}$ and $|x-y|\lesssim M^{-1}r_k$, so that $\dist(y,S_{k,I})\lesssim M^{-1}r_k$. Hence,
	\begin{equation*}
		\dist(\gamma_{k-1}^{-1}(y),  \gamma_{k-1}^{-1}(S_{k,I}))\lesssim M^{-1}r_k.
	\end{equation*}
	Putting this together with \eqref{eq:segments contained} and \eqref{eq:segments deep inside}, and assuming that $M\ge M_0$ for some absolute constant $M_0>10$, we get that $\gamma_{k-1}^{-1}(y)\in \gamma_{k-1}^{-1}(\Gamma_{k-1,I'})$, which is equivalent to $y\in\Gamma_{k-1,I'}$.
	
	Now, let $z\in \overline{S_{k,I}}$ be an endpoint of $S_{k,I}$ minimizing the distance to $x$. Observe that $x-z\in L_{k}$ and $\sigma_{k}(x)-x\in L_k^{\perp}$, so
	\begin{equation}\label{eq:sigmakx and z decomp}
		|\sigma_k(x) - z|^2 = |\sigma_k(x)-x|^2+|x-z|^2.
	\end{equation}
	Moreover, since $z$ is an endpoint of $S_{k,I}$, the point $G_{k,I}^{-1}(z)$ is an endpoint of $S_0$, and so by \ref{it:endpoints} $g(G_{k,I}^{-1}(z))=0$. Together with the fact that $g$ is $1$-Lipschitz this gives
	\begin{multline}\label{eq:sigmakx and x estimate}
		|\sigma_k(x)-x|= \big\lvert G_{k,I}\circ\sigma_0\circ G_{k,I}^{-1}(x) - G_{k,I}\circ G_{k,I}^{-1}(x)\big\rvert\\
		= r_k \big\lvert\sigma_0\circ G_{k,I}^{-1}(x) - G_{k,I}^{-1}(x)\big\rvert = r_k\big\lvert g(G_{k,I}^{-1}(x))\big\rvert = r_k\big\lvert g(G_{k,I}^{-1}(x)) - g(G_{k,I}^{-1}(z))\big\rvert\\
		\le r_k \big\lvert G_{k,I}^{-1}(x) - G_{k,I}^{-1}(z)\big\rvert = |x-z|.
	\end{multline}
	Furthermore, observe that since $y\in\Gamma_{k-1,I'}\setminus \overline{S_{k,I}},\ z\in \overline{S_{k,I}}$ is an endpoint of $S_{k,I}$, $\Ho(S_{k,I})=2\,r_k$, and $|x-y|\lesssim M^{-1}r_k$, we get that the point $\gamma_{k-1}^{-1}(z)\in S_0$ lies between the points $\gamma_{k-1}^{-1}(x)$ and $\gamma_{k-1}^{-1}(y)$. We already know that $\gamma_{k-1}$ is bilipschitz, and so
	\begin{multline}\label{eq:x z y almost colinear}
		|x-z| + |z-y| \approx |\gamma_{k-1}^{-1}(x)- \gamma_{k-1}^{-1}(z)| + |\gamma_{k-1}^{-1}(z)-\gamma_{k-1}^{-1}(y)|\\
		= |\gamma_{k-1}^{-1}(x)-\gamma_{k-1}^{-1}(y)|\approx |x-y|.
	\end{multline}
	Now, we need to further differentiate between two subcases. 
	
	\emph{Subcase 4a.}  $|x-y|\le 6M^{-1}r_k$, $x\in S_{k,I}$, and $y\in S_{k,Y}$ for some $Y\in\mathcal{I}^k,\ I\neq Y.$
	
	We claim that the point $z$ is a common endpoint of $S_{k,Y}$ and $S_{k,I}$. Indeed, since $y\in\Gamma_{k-1,I'}$ by \eqref{eq:y in graph}, we have $Y=(I', Z)\in\mathcal{I}^{k-1}\times\mathcal{I}$ and $S_{k,Y}\subset\Gamma_{k-1,I'}$. By \lemref{lem:union of curves} d) $S_{k,Y}=G_{k-1,I'}(S_{1,Z}) = G_{k-1,I'}\circ\sigma_0(Z)$, and $S_{k,I}= G_{k-1,I'}\circ\sigma_0(J)$. Recall that $|x-y|\le 6M^{-1}r_k$, which implies $\dist(S_{k,I}, S_{k,Y})\le 6M^{-1}r_k$, and so $\dist(Z,J)\lesssim M^{-1}r_{k-1}^{-1}r_k = M^{-1}r_1$. By \ref{it:intervals} $J$ and $Z$ are dyadic intervals of length $\sqrt{2}\,r_1$, which implies that $\dist(Z,J)=0$. Hence, the point $z$ is a common endpoint of $S_{k,I}$ and $S_{k,Y}$, and the estimates \eqref{eq:sigmakx and z decomp}, \eqref{eq:sigmakx and x estimate} are also valid with $x$ replaced by $y$.
	
	The Lipschitz property of $\sigma_k$ follows easily:
	\begin{equation*}
		|\sigma_k(x) - \sigma_k(y)|\le |\sigma_k(x) - z| + |z - \sigma_k(y)|\overset{\eqref{eq:sigmakx and z decomp},\eqref{eq:sigmakx and x estimate}}{\lesssim}|x-z| + |z-y|\overset{\eqref{eq:x z y almost colinear}}{\approx} |x-y|.
	\end{equation*}
	The converse inequality is a consequence of the fact that $S_{k,I}$ and $S_{k,Y}$ are co-linear, $x-y\in L_k,\ \sigma_k(x)-x\in L_k^{\perp},$ and  $\sigma_k(y)-y\in L_k^{\perp}$:
	\begin{multline*}
		|\sigma_k(x)-\sigma_{k}(y)|^2 = |\sigma_k(x)-x+x-y+y-\sigma_k(y)|^2\\
		= |x-y|^2 + |\sigma_k(x)-x+y-\sigma_k(y)|^2\ge |x-y|^2.
	\end{multline*}
	\emph{Subcase 4b}. $|x-y|\le 6M^{-1}r_k$, $x\in S_{k,I}$ for some $I\in\mathcal{I}^k$, and $y\in \Gamma_{k-1}\setminus \bigcup_{Y\in\mathcal{I}^k} S_{k,Y}$.
	
	In this case we have $\sigma_k(y)=y$. The upper bound follows from previous estimates:
	\begin{equation*}
		|\sigma_k(x) - y|\le |\sigma_k(x) - z| + |z - y|\overset{\eqref{eq:sigmakx and z decomp},\eqref{eq:sigmakx and x estimate}}{\lesssim}|x-z| + |z-y|\overset{\eqref{eq:x z y almost colinear}}{\approx} |x-y|.
	\end{equation*}
	
	Concerning the lower bound, it follows by elementary geometry and properties of our construction that $\pi/4\le\measuredangle(\sigma_k(x),z,y)\le \pi$, see \figref{fig:geometrical}. Thus, using the law of cosines
	\begin{multline*}
		|\sigma_k(x) - y|^2 = |\sigma_k(x) - z|^2 + |z - y|^2 - 2 |\sigma_k(x) - z||z - y|\cos(\measuredangle(\sigma_k(x),z,y))\\
		\ge |\sigma_k(x) - z|^2 + |z - y|^2 - \sqrt{2}|\sigma_k(x) - z||z - y|\\
		\ge \left(1-\frac{\sqrt{2}}{2}\right)(|\sigma_k(x) - z|^2 + |z - y|^2 )\overset{\eqref{eq:sigmakx and z decomp}}{\gtrsim}|x - z|^2 + |z - y|^2\gtrsim |x-y|^2.
	\end{multline*}
	Since this was the last case we had to check, we get that $\sigma_k$ is bilipschitz, as claimed.
\end{proof} 
\begin{figure}[h]
	\centering
	\def\svgwidth{8cm}
	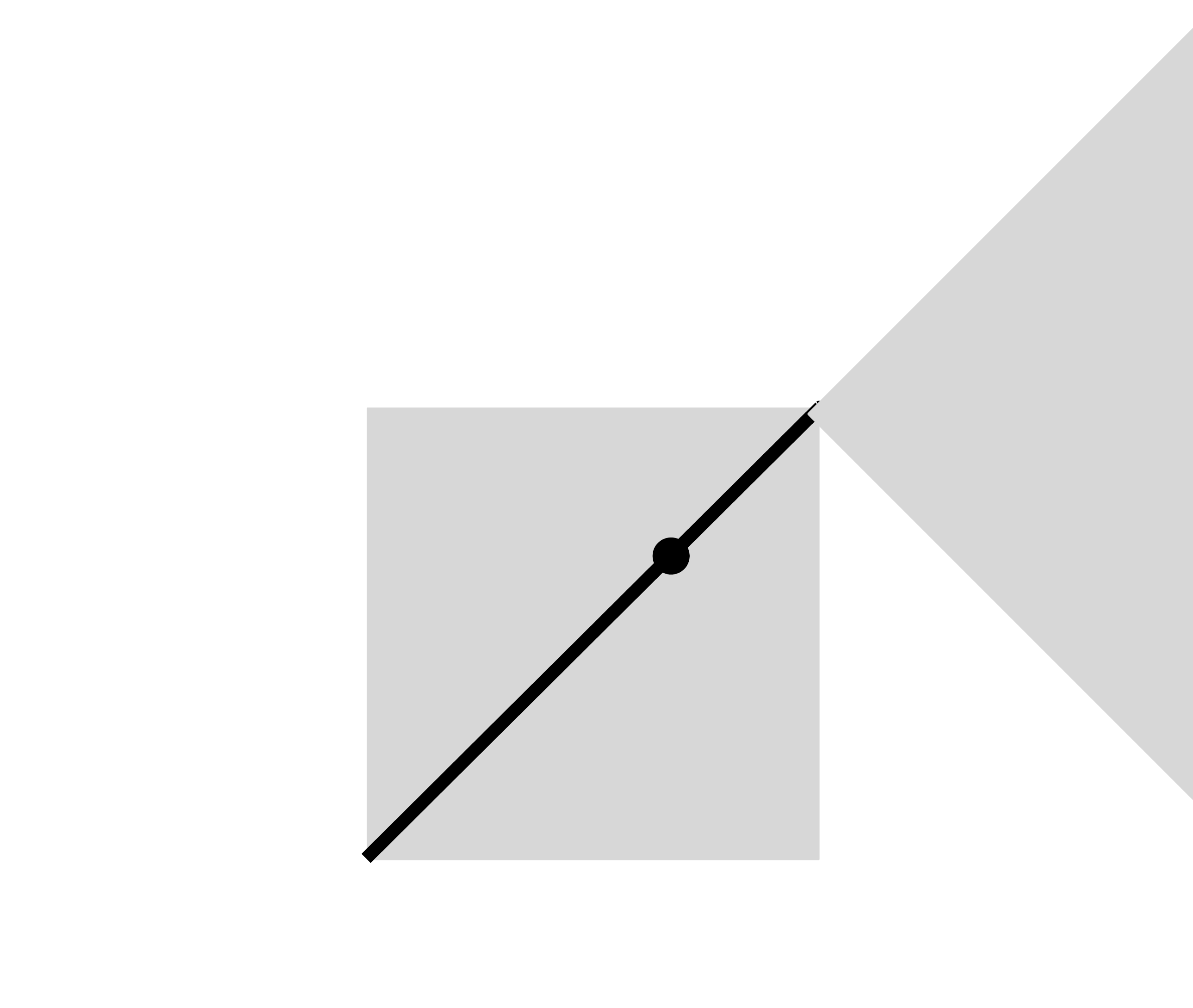
	\caption{Points $x,y,z$ lie on $\Gamma_{k-1,I'}$ (continuous curve above), which is a $1$-Lipschitz graph over the line $L_{k-1}$. $x$ belongs to the segment $S_{k,I}\subset\Gamma_{k-1,I'}$ (thick segment above), and $z$ is an endpoint of $S_{k,I}$. $\sigma_k(x)$ lies on $\Gamma_{k,I}$ (dashed curve above), a $1$-Lipschitz graph over $S_{k,I}$ with the same endpoints as $S_{k,I}$. The $1$-Lipschitz property implies that $\Gamma_{k,I}\subset \widetilde{S},$ where $\widetilde{S}$ is a square having $S_{k,I}$ as diagonal. On the other hand, the $1$-Lipschitz property of $\Gamma_{k-1,I'}$ implies that $\Gamma_{k-1,I'}\subset K_0:=\overline{K(z,L_{k-1}, \pi/4)}$, i.e. it lies in the two-sided version of cone $\widetilde{K}$ above. In particular, $y\in K_0$. However, in Subcase 4b we assume that  $|x-y|\le 6M^{-1}r_k$ and $y\not\in S_{k,I}$, and so $y$ must lie in $\widetilde{K}$, and not in the other one-sided cone comprising $K_0$. Since $L_{k-1}$ and $S_{k,I}$ form an angle $\pi/4$, the observations above imply $\pi/4\le\measuredangle(\sigma_k(x),z,y)\le \pi$ (see the dotted angle).}
	\label{fig:geometrical}
\end{figure}

Finally, we set
\begin{equation*}
	E_N = \Gamma_0\cup\Gamma_1\cup\Gamma_2\cup\Gamma_3.
\end{equation*}
Note that due to \lemref{lem:union of curves} c)
\begin{equation}\label{eq:graph decomp}
	E_N = \Gamma_0\cup\bigcup_{I\in\mathcal{I}}\Gamma_{1,I}\cup\bigcup_{I\in\mathcal{I}^2}\Gamma_{2,I}\cup\bigcup_{I\in\mathcal{I}^3}\Gamma_{3,I}.
\end{equation}
That is, $E_N$ is a union of a single big Lipschitz graph, and three layers of smaller Lipschitz graphs.

\subsection{\texorpdfstring{$E_N$ has BPLG}{E\_N has BPLG}}		
In this section we show that $E_N$ has big pieces of Lipschitz graphs, with constants independent of $N$. 

Observe that $E_N$ is AD-regular because it is a union of four bilipschitz curves. The ADR constants do not depend on $N$ due to \lemref{lem:sigmak bilip}.

\begin{lemma}\label{lem:BPLG of EN}
	For any $x\in E_N$ and any $0<r<\diam(E_N)$ we can find a Lipschitz graph $\Sigma$ (depending on $x$ and $r$) such that
	\begin{equation}\label{eq:BPLG}
		\Ho(E_N\cap B(x,r)\cap\Sigma) \gtrsim r,
	\end{equation}
	with the implicit constant independent of $N,\alpha$.
\end{lemma}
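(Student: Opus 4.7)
The plan is to take, for each $(x,r)$, one of the building-block $1$-Lipschitz graphs—either $\Gamma_0$ itself or a rescaled rotated copy $\Gamma_{k,I}$ from \lemref{lem:union of curves}—as the witness $\Sigma$. All of these are $1$-Lipschitz graphs over a line and are contained in $E_N$ by construction, so the work reduces to picking one that lies within distance $r/2$ of $x$ and has diameter $\gtrsim r$; the $1$-Lipschitz graph parametrization then immediately yields an arc of length $\gtrsim r$ inside $B(x,r)\cap\Sigma\subset B(x,r)\cap E_N$.

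The first step is an iterated distance estimate. Each $\Gamma_{k,I}$ is a $1$-Lipschitz graph over $S_{k,I}$ whose graph function has $L^\infty$-norm at most $2M^{-1}r_k$ (rescale the bound $\lVert g\rVert_\infty\le M^{-1}$ from \lemref{lem:Gamma from EN}), so every $y\in\Gamma_{k,I}$ satisfies $\dist(y,\Gamma_{k-1,I'})\le\dist(y,S_{k,I})\le 2M^{-1}r_k$ whenever $I=(I',J)$. Iterating across levels and summing a geometric series shows that if $x$ lies on a piece at deepest level $k_x\in\{0,1,2,3\}$, then for every $k\le k_x$ there is a well-defined ancestor $1$-Lipschitz graph $\Sigma_k(x)\in\{\Gamma_0\}\cup\{\Gamma_{k,I}:I\in\mathcal{I}^k\}$ with
\[
\dist(x,\Sigma_k(x))\le 4M^{-1}r_{k+1}.
\]

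Next I would pick the level to match the scale: if $r\ge 8M^{-1}r_1$, take $\Sigma:=\Gamma_0$; otherwise let $k\in\{1,2,3\}$ be the unique index with $8M^{-1}r_{k+1}\le r<8M^{-1}r_k$ (setting $r_4:=0$), and take $\Sigma:=\Sigma_{\min(k,k_x)}(x)$. The $\min$ is a bookkeeping device for the case $k>k_x$: then $r<8M^{-1}r_{k_x}\ll r_{k_x}$, so the deepest graph $\Sigma_{k_x}(x)$—which actually contains $x$—has diameter much larger than $r$. In every case $\dist(x,\Sigma)\le r/2$ (by Step~1 when $k\le k_x$, or trivially $=0$ when $\Sigma=\Sigma_{k_x}(x)\ni x$) and $\diam(\Sigma)\gtrsim r$ (in fact $\gtrsim rM$ when $\Sigma\neq\Gamma_0$). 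Picking $y\in\Sigma$ with $|x-y|\le r/2$ gives $B(y,r/2)\subset B(x,r)$, and since $\Sigma$ is a $1$-Lipschitz graph over a segment of length $\gtrsim r$, its graph parametrization (one-sided from $y$ if $y$ is close to an endpoint of $\Sigma$) yields an arc of length $\gtrsim r$ inside $B(y,r/2)\cap\Sigma$.

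The main technical obstacle is the level bookkeeping in the second step—choosing the right $k$ and handling points without ancestors at every level via the $\min(k,k_x)$ convention; the distance estimate of Step~1 and the final arc-length argument are routine consequences of the $1$-Lipschitz structure, and all constants come out independent of $N,\alpha$ thanks to \lemref{lem:sigmak bilip} and $\lip(g)=1$.
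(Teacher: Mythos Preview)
Your proof is correct and follows essentially the same approach as the paper's: both pick one of the building-block graphs $\Gamma_{k,I}$ at a level matched to the scale $r$, use the distance estimate coming from \lemref{lem:union of curves}~g) to show it lies within $r/2$ of $x$, and then invoke the $1$-Lipschitz structure. The only cosmetic difference is that the paper packages the iterated distance bound via the curve maps $\gamma_{l,i}$ (proving $\lVert\gamma_{l,i}-id\rVert_\infty\le 6M^{-1}r_{\min(l,i)+1}$ as a separate lemma) and uses scale bands $r_{k+1}\le r<r_k$, whereas you iterate the layer-to-layer estimate directly to get ancestors $\Sigma_k(x)$ and use bands $8M^{-1}r_{k+1}\le r<8M^{-1}r_k$; the underlying content is identical.
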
 

First, we prove an auxiliary estimate. Given integers $i,l\in\{0,1,2,3\}$ define $\gamma_{l,i}:\Gamma_l\to\Gamma_i$ as $ \gamma_{l,i}=\gamma_i\circ\gamma_l^{-1}.$
\begin{lemma}
	Let $i,l\in\{0,1,2,3\}$ and $k=\min(i,l)$. Then
	\begin{equation}\label{eq:x close to y}
		\lVert \gamma_{l,i} - id\rVert_{L^{\infty}(\Gamma_l)} \le 6M^{-1} r_{k+1}.
	\end{equation}
\end{lemma}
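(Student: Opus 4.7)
The key observation is that by Lemma \ref{lem:union of curves} f), we have $\gamma_j = \sigma_j \circ \gamma_{j-1}$, so the composition $\gamma_{l,i} = \gamma_i \circ \gamma_l^{-1}$ telescopes neatly. Specifically, if $l \le i$, then on $\Gamma_l$ we have
\begin{equation*}
\gamma_{l,i} = \sigma_i \circ \sigma_{i-1} \circ \cdots \circ \sigma_{l+1},
\end{equation*}
while if $l > i$, then $\gamma_{l,i} = (\sigma_{i+1} \circ \cdots \circ \sigma_l)^{-1}$ on $\Gamma_l$. In either case, only the maps $\sigma_j$ with $j > k = \min(i,l)$ appear.

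\textbf{Case $l \le i$.} Fix $x \in \Gamma_l$ and set $x_l = x$, $x_j = \sigma_j(x_{j-1}) \in \Gamma_j$ for $j = l+1,\dots,i$, so that $\gamma_{l,i}(x) = x_i$. By the triangle inequality and Lemma \ref{lem:union of curves} g),
\begin{equation*}
|\gamma_{l,i}(x) - x| \;\le\; \sum_{j=l+1}^{i} |\sigma_j(x_{j-1}) - x_{j-1}| \;\le\; \sum_{j=l+1}^{i} 2M^{-1} r_j.
\end{equation*}
Since $r_{j+1}/r_j = r_1 = 2^{-N(M+1)-1/2}$ is much smaller than $1/2$, the sum is controlled by a geometric series:
\begin{equation*}
\sum_{j=l+1}^{i} r_j \;\le\; \frac{r_{l+1}}{1 - r_1} \;\le\; 2 r_{l+1},
\end{equation*}
giving $|\gamma_{l,i}(x) - x| \le 4 M^{-1} r_{l+1} = 4 M^{-1} r_{k+1} \le 6 M^{-1} r_{k+1}$.

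\textbf{Case $l > i$.} Fix $x \in \Gamma_l$, and let $y = \gamma_{l,i}(x) \in \Gamma_i$, so that $\sigma_{i+1} \circ \cdots \circ \sigma_l(y) = x$ (this uses that each $\sigma_j$ is bilipschitz and hence bijective from $\Gamma_{j-1}$ to $\Gamma_j$, by Lemma \ref{lem:sigmak bilip}). Setting $y_i = y$ and $y_j = \sigma_j(y_{j-1})$ for $j = i+1,\dots,l$, the same telescoping argument yields
\begin{equation*}
|x - y| = |y_l - y_i| \;\le\; \sum_{j=i+1}^{l} 2 M^{-1} r_j \;\le\; 4 M^{-1} r_{i+1} = 4 M^{-1} r_{k+1}.
\end{equation*}

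\textbf{Obstacles.} There is essentially no difficulty here beyond bookkeeping: the argument is a clean telescoping estimate that leverages the explicit bound $\|\sigma_j - id\|_\infty \le 2M^{-1} r_j$ from Lemma \ref{lem:union of curves} g) and the extreme rapidity with which the radii $r_j = r_1^j$ decrease. The only subtle point is to verify that the compositions make sense set-theoretically, which is immediate from the bijectivity of the $\sigma_j$ between successive curves.
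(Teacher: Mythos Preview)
Your proof is correct and follows essentially the same approach as the paper: both telescope via $\gamma_j=\sigma_j\circ\gamma_{j-1}$ and invoke the bound $\lVert\sigma_j-id\rVert_\infty\le 2M^{-1}r_j$ from Lemma~\ref{lem:union of curves}~g). The only cosmetic differences are that the paper treats $l>i$ first and then obtains $l<i$ by the symmetry $|x-\gamma_{l,i}(x)|=|\gamma_{i,l}(y)-y|$ with $y=\gamma_{l,i}(x)$, and that the paper bounds $\sum_j r_j$ crudely by $3\,r_{k+1}$ (there being at most three terms) rather than by a geometric series.
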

\begin{proof}
	If $i=l$ the result is clear because $\gamma_{l,i}= id$. Assume $l> i$.
	Applying $(l-i)$-many times \lemref{lem:union of curves} g) we get that
	\begin{multline*}
		|x-\gamma_{l,i}(x)|\le \sum_{j=i+1}^{l} |\gamma_{l,j-1}(x) - \gamma_{l,j}(x)| = \sum_{j=i+1}^{l} |\gamma_{l,j-1}(x) - \sigma_j(\gamma_{l,j-1}(x))|\\
		\le \sum_{j=i+1}^{l} 2M^{-1}r_j\le 2(j-k)M^{-1} r_{i+1}\le 6M^{-1} r_{i+1}.
	\end{multline*}
	On the other hand, if $l < i$, then applying the estimate above to $y=\gamma_{l,i}(x)$ we get 
	\begin{equation*}
		|x-\gamma_{l,i}(x)| = |\gamma_{i,l}(y) - y|\le  6M^{-1} r_{l+1}.
	\end{equation*}
\end{proof}

\begin{proof}[Proof of \lemref{lem:BPLG of EN}]
	Let $x\in E_N$ and $0<r<\diam(E_N).$ By \eqref{eq:graph decomp} there exist $j\in\{0,1,2,3\}$ and $I\in\mathcal{I}^j$ such that $x\in\Gamma_{j,I}$. 
	
	Suppose $r<r_j$. Since $\Gamma_{j,I}$ is a Lipschitz graph satisfying $\Ho(\Gamma_{j,I})\ge r_j>r$, we have
	\begin{equation*}
		\Ho(E_N\cap B(x,r)\cap\Gamma_{j,I})= \Ho(B(x,r)\cap\Gamma_{j,I})\gtrsim r.
	\end{equation*}
	That is, we may choose $\Sigma=\Gamma_{j,I}$.
	
	Now assume $r_j\le r< r_0=1$. Let $k\in\{0,1,2\}$ be such that $r_{k+1}\le r < r_{k}$ (of course, $k+1\le j$).  Let $y =\gamma_{j,k}(x)$.
	Observe that, by \lemref{lem:union of curves} c), since $y\in\Gamma_k$, there exists some $k'\in\{0,\dots, k\}$ such that $y\in\Gamma_{k',I'}$ for some $I'\in\mathcal{I}^{k'}$. Since $k'\le k$, we have $\Ho(\Gamma_{k',I'})\approx r_{k'}\ge r_k> r$. Moreover, assuming $M\ge 12$, \eqref{eq:x close to y} gives
	\begin{equation*}
		\dist(x,\Gamma_{k',I'})\le |x-y| = |x-\gamma_{j,k}(x)|\le \frac{r_{k+1}}{2}\le \frac{r}{2},
	\end{equation*}
	and so
	\begin{equation*}
		\Ho(E_N\cap B(x,r)\cap\Gamma_{k',I'}) = \Ho(B(x,r)\cap\Gamma_{k',I'})\gtrsim r.
	\end{equation*}
	Hence, we may choose $\Sigma=\Gamma_{k',I'}$.
	
	Finally, for $1<r<\diam(E_N)\approx 1$, the condition \eqref{eq:BPLG} is satisfied with $\Sigma=\Gamma_0$.
\end{proof}

\subsection{\texorpdfstring{$E_N$}{E\_N} has big conical energy}
In this section we show that $E_N$ satisfies \eqref{eq:big conical energy}.

We introduce additional notation. Analogously to the definition of $S_{k,I}$ for $k\in\{0,1,2,3\}$, for $I=(I',J)\in\mathcal{I}^3\times\mathcal{I}$ we define $S_{4,I}=G_{3,I'}(S_{1,J})$.		

If $I\in\mathcal{I}^{k+j}$ is of the form $I=(I',I'')\in \mathcal{I}^k\times\mathcal{I}^j$, we will write
\begin{equation*}
	S_{k,I}:=S_{k,I'},\qquad \Gamma_{k,I}:=\Gamma_{k,I'},\qquad G_{k,I}:=G_{k,I'}.
\end{equation*}

\begin{lemma}\label{lem:big energy on the bad set}
	Let $I=(I_1,I_2,I_3,I_4)\in\mathcal{I}^4$, and let $x\in S_{4,I}\subset\Gamma_{3,I}\subset E_N$. Then, for any $\theta\in[0,\pi)$ we have 
	\begin{equation}\label{eq:big energy}
		\int_0^1 \frac{\Ho(K(x,\theta,\alpha,r)\cap E_N)}{r} \ \frac{dr}{r}\gtrsim N.
	\end{equation}
\end{lemma}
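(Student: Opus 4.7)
The strategy is to exploit the four-layer rotational structure of $E_N$: each layer $\Gamma_{k,I^{(k)}}$ is a translate of $\rho^k\circ\delta^k(\Gamma_0)$, i.e.\ a copy of $\Gamma_0$ rotated by $k\pi/4$ and scaled by $r_k$. By \ref{it:big energy} combined with the scale- and rotation-invariance of the conical energy (the integrand is homogeneous of degree $0$ since $n=1$), the rescaled/rotated statement asserts that for any $\theta'$ with $\theta'-k\pi/4\in[0,\pi/4]\pmod\pi$ and any point $y$ within distance $\sqrt{2}\,r_{k+1}=r_k\cdot 2^{-N(M+1)}$ of $S_{k+1,(I^{(k)},J)}$ for some $J\in\mathcal{I}$,
\begin{equation*}
\int_0^{r_k}\frac{\Ho(K(y,\theta',\alpha,r)\cap\Gamma_{k,I^{(k)}})}{r}\,\frac{dr}{r}\gtrsim N.
\end{equation*}
Since the four intervals $[k\pi/4,(k+1)\pi/4]$, $k\in\{0,1,2,3\}$, cover $[0,\pi)$ modulo $\pi$, for any given direction $\theta$ in the lemma there is at least one $k$ for which this setup applies. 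The first step of the proof is therefore to fix such a $k$.

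Next I would check that $x$ itself lies within distance $\sqrt{2}\,r_{k+1}$ of $S_{k+1,(I_1,\ldots,I_{k+1})}$. Set $t=\gamma_3^{-1}(x)\in S_0$ and $y_j=\gamma_j(t)\in\Gamma_j$. I claim $y_j\in S_{j+1,(I_1,\ldots,I_{j+1})}$ for $j=0,1,2,3$. This is shown by decreasing induction starting from $y_3=x\in S_{4,I}$: the identity $\sigma_j(S_{j,I^{(j)}})=\Gamma_{j,I^{(j)}}$ and \lemref{lem:union of curves} d) (which gives $S_{j+1,(I^{(j)},I_{j+1})}\subset\Gamma_{j,I^{(j)}}$) together yield $y_{j-1}=\sigma_j^{-1}(y_j)\in S_{j,I^{(j)}}$ at each step. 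Telescoping via \lemref{lem:union of curves} g),
\begin{equation*}
|x-y_k|\le\sum_{j=k+1}^{3}|\sigma_j(\gamma_{j-1}(t))-\gamma_{j-1}(t)|\le\sum_{j=k+1}^{3}2M^{-1}r_j\le 4M^{-1}r_{k+1},
\end{equation*}
which is safely below $\sqrt{2}\,r_{k+1}$ since $M\ge 100$. In particular, $\dist(x,S_{k+1,(I_1,\ldots,I_{k+1})})<\sqrt{2}\,r_{k+1}$.

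Combining the two steps, the scaled/rotated version of \ref{it:big energy} applied with $y=x$ and $J=I_{k+1}$ gives
\begin{equation*}
\int_0^{r_k}\frac{\Ho(K(x,\theta,\alpha,r)\cap\Gamma_{k,I^{(k)}})}{r}\,\frac{dr}{r}\gtrsim N.
\end{equation*}
Using $\Gamma_{k,I^{(k)}}\subset E_N$ (from \eqref{eq:graph decomp}) and $r_k\le 1$, extending the domain of integration to $[0,1]$ yields \eqref{eq:big energy}. The main conceptual obstacle I expect is carefully articulating the scale- and rotation-transfer of \ref{it:big energy} to $\Gamma_{k,I^{(k)}}$ (pulling back via $G_{k,I^{(k)}}^{-1}$ to the $\Gamma_0$ setup, and noting that cone directions transform by the same rotation); the main bookkeeping obstacle is the inductive identification $y_j\in S_{j+1,(I_1,\ldots,I_{j+1})}$, which requires correctly matching each sub-index of $I=(I_1,I_2,I_3,I_4)$ to the corresponding layer.
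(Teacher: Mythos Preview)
Your proposal is correct and follows essentially the same route as the paper: choose $k\in\{0,1,2,3\}$ so that $\theta-k\pi/4\in[0,\pi/4)$, transfer \ref{it:big energy} to $\Gamma_{k,(I_1,\dots,I_k)}$ via the similarity $G_{k,I}$, and verify that $x$ lies within the required distance of $S_{k+1,(I_1,\dots,I_{k+1})}$. The paper carries out the transfer by an explicit change of variables (your ``scale- and rotation-invariance'' packaged as a computation) and simply asserts $\gamma_{3,k}(S_{4,I})\subset S_{k+1,I}$ together with the bound from \eqref{eq:x close to y}; your decreasing induction $y_j\in S_{j+1,(I_1,\dots,I_{j+1})}$ makes this inclusion explicit and yields the slightly sharper $|x-y_k|\le 4M^{-1}r_{k+1}$, but the argument is the same in substance.
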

\begin{proof}
	Let $I\in\mathcal{I}^4$, $x\in S_{4,I}$ and $\theta\in[0,\pi)$ be as above. Recall that $L_0=\{(x,0) \ :\ x\in\R\}$, $\rho$ is the counterclockwise rotation by $\pi/4$, and $L_k= \rho^k (L_0)\in G(2,1).$ 
	Observe that there exists some $k\in\{0,1,2,3\}$ such that $\theta-k\pi/4\in[0,\pi/4)$. Fix such $k$. 
	We are going to use \ref{it:big energy} with respect to $\Gamma_{k,I}$ to arrive at \eqref{eq:big energy}.
	
	Recall that $S_{k+1,I}=G_{k,I}(S_{1,I_{k+1}}),$ where $G_{k,I}= \tau\circ\rho^k\circ\delta^k$ for some translation $\tau$. Recall also that $G_{k,I}(\Gamma) = \Gamma_{k,I}$. Let $x' = G_{k,I}^{-1}(x)$, and $\theta'=\theta-k\pi/4\in[0,\pi/4)$. Note that $V_{\theta'}=\rho^{-k}(V_{\theta})=G_{k,I}^{-1}(V_{\theta})$. Using the fact that $G_{k,I}$ is a similarity with stretching factor $r_k$, we get
	\begin{multline}\label{eq:estimating conical energu again}
		\int_0^1 \frac{\Ho(K(x,\theta,\alpha,r)\cap E_N)}{r} \ \frac{dr}{r}\ge \int_0^1 \frac{\Ho(K(x,\theta,\alpha,r)\cap \Gamma_{k,I})}{r} \ \frac{dr}{r}\\
		=\int_0^1 r_k\, \frac{\Ho(K(x',\theta',\alpha,r_k^{-1}\,r)\cap \Gamma)}{r} \ \frac{dr}{r} = \int_0^{r_k^{-1}} \frac{\Ho(K(x',\theta',\alpha,s)\cap \Gamma)}{s} \ \frac{ds}{s}. 
	\end{multline}
	Recall that $k$ was chosen in such a way that $\theta'\in[0,\pi/4)$. In order to use \ref{it:big energy}, it only remains to show that $\dist(x',G(I'))\le 2^{-N(M+1)}$ for some $I'\in \mathcal{I}$.
	
	Observe that $\gamma_{3,k}(S_{4,I})\subset S_{k+1,I}$. We know from \eqref{eq:x close to y} that if $M\ge 6$, then
	\begin{equation}\label{eq:dist x to Sk+1}
		\dist(x,S_{k+1,I})\le \dist(x,\gamma_{3,k}(S_{4,I}))\le |x-\gamma_{3,k}(x)|\le  r_{k+1}.
	\end{equation}
	Thus,
	\begin{multline*}
		\dist(x', S_{1,I_{k+1}}) = \dist(G_{k,I}^{-1}(x),G_{k,I}^{-1}(S_{k+1,I})) = r_k^{-1}\dist(x,S_{k+1,I})\\
		\le r_k^{-1}\, r_{k+1}  = r_1=2^{-N(M+1)-1/2}\le 2^{-N(M+1)}.
	\end{multline*}
	$S_{1,I_{k+1}}$ was defined as $\sigma_0(I_{k+1})=G(I_{k+1})$, and so it follows from \ref{it:big energy} that the last term in \eqref{eq:estimating conical energu again} is greater than $CN$ for some absolute constant $C$. Thus, \eqref{eq:big energy} holds.
\end{proof}
Now we can finish the proof of \propref{prop:counterexample BPLG}.		
Observe that
\begin{multline*}
	\Ho\bigg(\bigcup_{I\in\mathcal{I}^4} S_{4,I} \bigg) = \sum_{I'\in\mathcal{I}^3,\, J\in\mathcal{I}}\Ho(G_{3,I'}(S_{1,J}))\\
	= (\#\mathcal{I})^3\, r_3\sum_{J\in\mathcal{I}} \Ho(S_{1,J}) \ge (\#\mathcal{I})^3\, r_3\sum_{J\in\mathcal{I}}\Ho(J)\\
	\overset{\ref{it:intervals}}{=}2^{-3M}\, 2^{3N(M+1)}\, 2^{-3N(M+1)-3/2}\, 2^{-M+1} = 2^{-4M-1/2}\approx_{\alpha} 1,
\end{multline*}
where we also used that $M$ is a constant depending only on $\alpha$. Together with \lemref{lem:big energy on the bad set}, this shows that the set $E_N$ has the desired property \eqref{eq:big conical energy}, i.e.
\begin{multline*}
	\int_{E_N}\int_0^{1} \frac{\Ho(K(x,\theta_x,\alpha,r)\cap E_N)}{r} \ \frac{dr}{r}d\Ho(x)\\
	\ge \sum_{I\in\mathcal{I}4} \int_{S_{4,I}}\int_0^{1} \frac{\Ho(K(x,\theta_x,\alpha,r)\cap E_N)}{r} \ \frac{dr}{r}d\Ho(x) \gtrsim_{\alpha} N.
\end{multline*}
Thus, the proof of \propref{prop:counterexample BPLG} is complete. All that remains to prove is \lemref{lem:Gamma from EN}. We do that in the following two subsections.

\subsection{Construction of \texorpdfstring{$g$}{g}}

In this subsection we construct a function $g$ and a family of dyadic intervals $\mathcal{I}$ that satisfy \ref{it:endpoints}, \ref{it:increasing}, and \ref{it:intervals}.

First, we define a family of auxiliary functions.
For $j=1,\dots, M$ we define $f_j:[-1,1]\to [-M^{-1}2^{-jN},M^{-1}2^{-jN}]$ as
\begin{equation*}
	f_j(t) = \frac{h(2^{jN}t)}{M2^{jN}},
\end{equation*}
where $h(t):\R\to [-1,1]$ is the $1$-Lipschitz triangle wave:
\begin{equation*}
	h(t) = |t \bmod 4 -2| - 1.
\end{equation*}
In the above $t \bmod 4$ denotes the unique number $s\in [0,4)$ such that $t=4k+s$ for some $k\in\Z$. 

Note that for all $j$ we have $\lip(f_j)=M^{-1}$. For $j=1,\dots, M$ we define also $g_j:[-1,1]\to [-M^{-1}2^{-N+1},M^{-1}2^{-N+1}]$ as
\begin{equation*}
	g_j(t) = \sum_{i=1}^{j} f_{i}(t),
\end{equation*}
and we set $\Gamma_j = \graph(g_j)\subset B(0,1)\subset\R^2,\ g=g_M,\ \Gamma=\Gamma_M$.	See \figref{fig:plots of g}.
Observe that $g$ is $1$-Lipschitz. 
\begin{figure}[h]
	\includegraphics[height=8cm]{./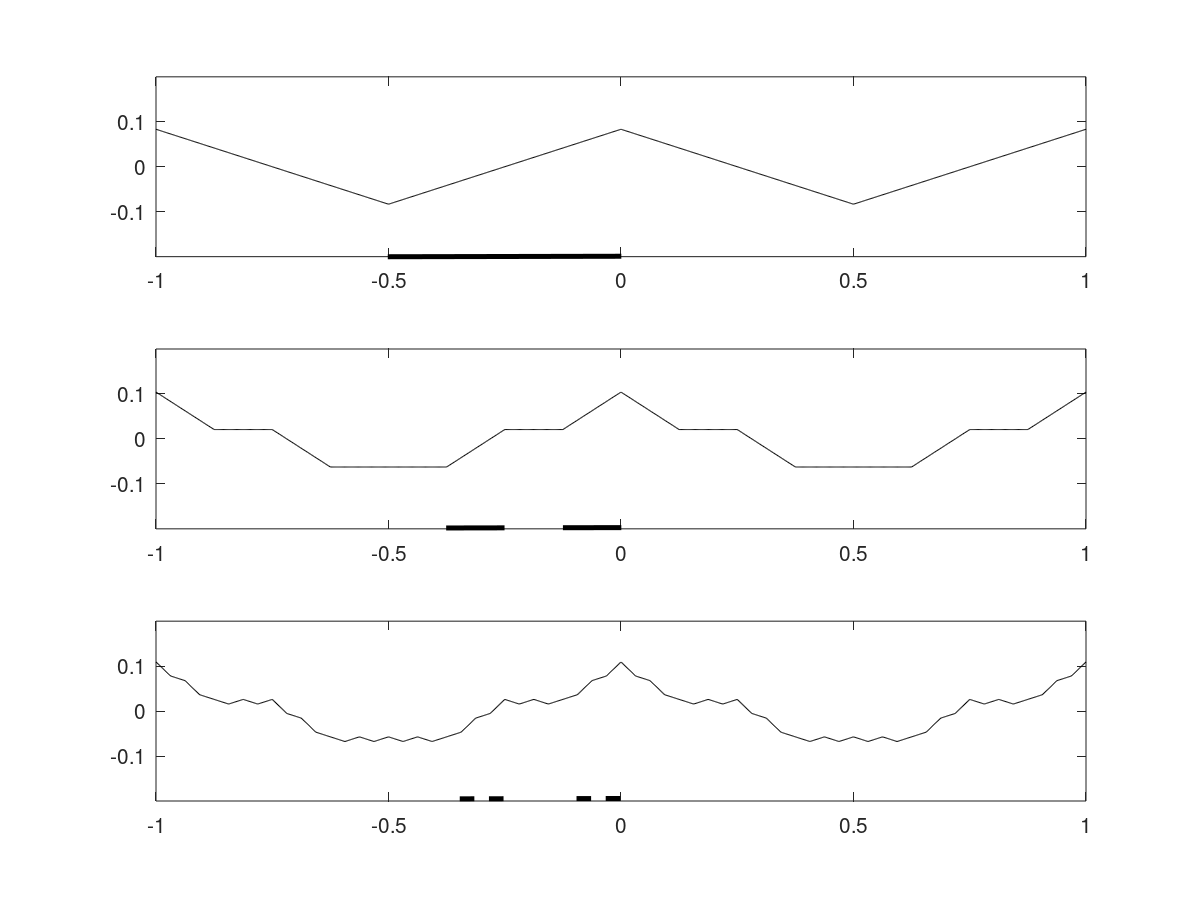}
	\caption{Top to bottom: graphs of $g_1,\ g_2,$ and $g_3=g$ when $N=2$ and $M=3$. The thick segments denote intervals in $\mathcal{G}_1,\ \mathcal{G}_2,$ and $\mathcal{G}_3,$ respectively.} 
	\label{fig:plots of g}
\end{figure}
\begin{proof}[Proof of \ref{it:endpoints}]
	We want to show that $g(1)=g(-1)=0$. Since $h$ is an even function, the functions $f_j$ and $g_j$ are also even. Hence, $g(1)=g(-1)$. Note also that if we have some function $\tilde{g}$ satisfying properties \ref{it:increasing} and \ref{it:big energy}, then for any constant $C\in\R$ the function $\tilde{g}+C$ will also satisfy \ref{it:increasing} and \ref{it:big energy}. In other words, these properties are invariant under adding constants. It follows that we can work with the function $g$ as defined above, prove \ref{it:increasing} and \ref{it:big energy}, and at the end replace $g$ by $g-g(1)$. So the property \ref{it:endpoints} is not an issue.
\end{proof}

We proceed to define the family $\mathcal{I}\subset\Delta_{(M+1)N}$.

Recall that $\Delta_k$ denotes the open dyadic intervals of length $2^{-k}$. Observe that for any $j$ the functions $f_j$ and $g_j$ are linear on each interval from $\Delta_{jN}$, and we have $f_j'=M^{-1}$ on every second interval, and $f_j'=-M^{-1}$ on the rest. 

Set $\mathcal{G}_j\subset\Delta_{jN}$ to be the family of dyadic intervals $I$ contained in $[-1/2,1/2]$ such that for all $1\le i\le j$ we have $f'_i = M^{-1}$ on $I$. It is easy to see that each $\mathcal{G}_j$ consists of $2^{jN -j}$ disjoint intervals of length $2^{-jN}$, see \figref{fig:plots of g}. We define also $\mathcal{I}\subset \Delta_{(M+1)N}$ as the family of dyadic intervals of length $2^{-(M+1)N}$ contained in $\bigcup_{I\in\mathcal{G}_M} I$. 

\begin{proof}[Proof of \ref{it:intervals}]
	By the definition above we have
	\begin{equation}\label{eq:number of intervals in I}
		\#\mathcal{I}=2^N\cdot\#\mathcal{G}_M = 2^{(M+1)N-M},
	\end{equation}
	so the property \ref{it:intervals} holds.
\end{proof}

\begin{proof}[Proof of \ref{it:increasing}]
	We have defined $\mathcal{G}_j$ in such a way that if $t\in I\in\mathcal{G}_j $ then $g_j'(t)=jM^{-1}$. It follows that if $t\in I\in\mathcal{I}$, then $t\in J$ for some $J\in\mathcal{G}_M$, and so $g'=1$. Thus, \ref{it:increasing} holds.		
\end{proof}

\subsection{\texorpdfstring{$\Gamma$}{Gamma} has big conical energy}
This subsection is dedicated to proving \ref{it:big energy}. We recall the statement for reader's convenience: 
\begin{enumerate}
	\item[(P4)] Let $\Gamma=\graph(g)$, $G:[-1,1]\to\Gamma$ be the graph map $G(t)=(t,g(t))$. For any $I\in\mathcal{I}$, any $x\in\R^2$ with $\dist(x,G(I))< 2^{-N(M+1)}$, and all $\theta\in[0,\pi/4]$, we have
	\begin{equation}\label{eq:big conical energy of Gamma}
		\int_0^1 \frac{\Ho(K(x,\theta,\alpha,r)\cap \Gamma)}{r} \ \frac{dr}{r}\gtrsim N.
	\end{equation}
\end{enumerate}

Fix $x,\ I,$ and $\theta$ as above. We will show \eqref{eq:big conical energy of Gamma}.

Since $\dist(x,G(I))< 2^{-N(M+1)}$, there exists $t_0\in {I}$ such that $|x-G(t_0)|\le 2^{-N(M+1)}$. Fix such $t_0$.	

For every $j=1,\dots, M$ define $G_j(t) = (t,g_j(t))$. For every $t\in \bigcup_{I\in\Delta_{jN}} I$ set $L_j(t)\subset\R^2$ to be the line tangent to $\Gamma_j$ at $G_j(t)$. We define also $I_j(t)$ as the unique interval from $\Delta_{jN}$ containing $t$. Note that, since $g_j$ is linear on intervals from $\Delta_{jN}$, we have $L_j(t)=L_j(t')$ whenever $t'\in I_j(t)$. 
Denote by $L_0$ the $x$-axis.

Observe that if $I_M(t)\in\mathcal{G}_M$, then for each $1\le j\le M$ we have $g_j'(t)=jM^{-1}$. Thus,
\begin{equation}\label{eq:angle Lj L0}
	\measuredangle(L_j(t),L_0)=\arctan(jM^{-1}),\quad\text{and}\quad \measuredangle(L_j(t),V_{\pi/4})\le \pi/8.
\end{equation}
Set $L_j=L_j(t)-(t,g_j(t))$. Note that $(0,0)\in L_j$, and that the definition of $L_j$ does not depend on $t$, as long as $I_M(t)\in\mathcal{G}_M$. 
Since $\theta\in[0,\pi/4]$, it follows from \eqref{eq:angle Lj L0} that there exists some $1\le j\le M$ such that
\begin{multline}
	\measuredangle(V_{\theta},L_j)\le \max_{1\le i \le M} \big(\arctan(iM^{-1}) - \arctan((i-1)M^{-1})\big)\\
	 = \arctan(M^{-1})\le M^{-1}.
\end{multline}	
Fix such $j$. Recall that $M=100 \lceil \alpha^{-1}\rceil$, and so
\begin{equation}
	\measuredangle(V_{\theta},L_j)\le M^{-1}\le \frac{\alpha}{10}.
\end{equation}
Hence, for any $r>0$
\begin{equation}\label{eq:cone around V contains cone around Lj}
	K(x,\theta,\alpha,r)\supset K(x,L_j,\alpha/2,r).
\end{equation}
\begin{lemma}\label{lem:Gj approximates G well}
	For $t\in [-1,1]$ we have $|G(t) - G_j(t)|= |g(t) - g_j(t)|\le2M^{-1}\, 2^{-N(j+1)}$.
\end{lemma}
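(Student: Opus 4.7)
The plan is straightforward: unfold the definition of $g$ and $g_j$ as partial sums, apply the triangle inequality, and sum a geometric series.

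First I would recall that by construction $g = g_M = \sum_{i=1}^{M} f_i$ while $g_j = \sum_{i=1}^{j} f_i$, so for every $t \in [-1,1]$
\begin{equation*}
g(t) - g_j(t) = \sum_{i=j+1}^{M} f_i(t).
\end{equation*}
The equality $|G(t)-G_j(t)| = |g(t)-g_j(t)|$ is immediate from $G(t)=(t,g(t))$ and $G_j(t)=(t,g_j(t))$, so it suffices to bound the sum on the right.

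Next I would use the pointwise bound $|f_i(t)| \le M^{-1} 2^{-iN}$, which follows directly from the definition $f_i(t) = h(2^{iN}t)/(M 2^{iN})$ together with $\lVert h\rVert_{\infty}\le 1$. Summing the geometric series,
\begin{equation*}
|g(t)-g_j(t)| \le \sum_{i=j+1}^{M} M^{-1} 2^{-iN} \le M^{-1} 2^{-(j+1)N} \sum_{k=0}^{\infty} 2^{-kN} = \frac{M^{-1} 2^{-N(j+1)}}{1-2^{-N}}.
\end{equation*}
Since $N \ge 100(1+\log_2(\alpha^{-1})) \ge 1$, we have $(1-2^{-N})^{-1} \le 2$, yielding the claimed estimate $|g(t)-g_j(t)| \le 2M^{-1} 2^{-N(j+1)}$.

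There is no real obstacle here; the only step worth double-checking is the uniform bound $\lVert f_i\rVert_\infty \le M^{-1}2^{-iN}$, which is transparent from the explicit formula for the triangle wave $h$ and from the scaling $f_i(t) = M^{-1} 2^{-iN} h(2^{iN} t)$.
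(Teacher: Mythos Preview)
Your proof is correct and follows essentially the same approach as the paper: write $g-g_j$ as the tail $\sum_{i=j+1}^M f_i$, bound each $|f_i|$ by $M^{-1}2^{-iN}$, and sum the geometric series. The paper's version is slightly terser, but the argument is identical.
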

\begin{proof}
	The estimate follows immediately from the definition of $g$ and $g_j$:
	\begin{equation*}
		|g(t) - g_j(t)| = \left\lvert \sum_{i=j+1}^M f_i(t)\right\rvert \le  \sum_{i=j+1}^M |f_i(t)| \le \sum_{i=j+1}^{\infty} \frac{1}{M}2^{-iN}\le 2M^{-1}\, 2^{-N(j+1)}.
	\end{equation*}
\end{proof}
Recall that $t_0\in I\in\mathcal{I}$ was such that $|x-G(t_0)|\le 2^{-N(M+1)}$. Set $x' = G_j(t_0)$. Then, by the lemma above, we have
\begin{equation}\label{eq:x and x' distance}
	|x-x'|\le |x-G(t_0)| + |G(t_0)-G_j(t_0)|\le 2^{-N(M+1)}+2^{-N(j+1)}\le 2^{-N(j+1)+1}.
\end{equation}
Let $I'\in\mathcal{G}_j$ be the unique dyadic interval in $\Delta_{jN}$ containing $I$. That is, $I' = I_j(t_0)$. 

Let $K(x,V,\alpha,r,R)$ denote the twice truncated cone $K(x,V,\alpha,R)\setminus B(x,r)$. In the lemma below we show that for all the scales between $2^{-N(j+1)}$ and $2^{-Nj}$, $G(I')$ has large intersection with the the twice truncated cone centered at $x'$ with direction $L_j$ corresponding to that scale.

\begin{lemma}
	For $t\in I'$ such that $|G(t)-x'|\ge 2^{-N(j+1)}$ we have $G(t)\in K(x',L_j,\alpha/8).$ Moreover, for integers $k$ satisfying $Nj\le k\le N(j+1) - 1$ we have
	\begin{equation}\label{eq:lower bound on Gamma in cone}
		\Ho (G(I')\cap K(x',L_j,\alpha/8,2^{-k-1},2^{-k+2}))\gtrsim 2^{-k}.
	\end{equation}
\end{lemma}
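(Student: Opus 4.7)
The key observation is that $I'\in\mathcal{G}_j$ forces $g_j$ to be affine on $I'$ with slope $j/M$, and therefore $G_j(I')$ lies entirely on the line $x'+L_j$ passing through $x'=G_j(t_0)$. I would first record this fact as $\dist(G_j(t),x'+L_j)=0$ for every $t\in I'$ and combine it with \lemref{lem:Gj approximates G well} to obtain
\begin{equation*}
	\dist(G(t),x'+L_j)\le |G(t)-G_j(t)|\le 2M^{-1}\,2^{-N(j+1)},\qquad t\in I'.
\end{equation*}
Thus $G(I')$ is trapped in a very thin strip around the line $x'+L_j$.

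For the first assertion, if $|G(t)-x'|\ge 2^{-N(j+1)}$, dividing the above bound by $|G(t)-x'|$ gives $\dist(G(t),x'+L_j)/|G(t)-x'|\le 2/M$. Since $M\ge 100\alpha^{-1}$ and $\alpha$ is small (so $\sin(\alpha/8)\ge\alpha/16$), one has $2/M\le\alpha/50<\sin(\alpha/8)$, and $G(t)\in K(x',L_j,\alpha/8)$ follows at once from the definition of the cone.

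For the annular lower bound, I would fix $k$ with $Nj\le k\le N(j+1)-1$, so that $2^{-k-1}\ge 2^{-N(j+1)}$; by the first part, every $t\in I'$ with $|G(t)-x'|\ge 2^{-k-1}$ automatically lies in $K(x',L_j,\alpha/8)$. Since $G$ does not contract horizontally, so that $\Ho(G(A))\ge\Ho(A)$ for $A\subset I'$, the task reduces to producing a subset of $I'$ of $\Ho$-measure $\gtrsim 2^{-k}$ on which $|G(t)-x'|\in[2^{-k-1},2^{-k+2}]$. Writing $|G_j(t)-x'|=|t-t_0|\sqrt{1+(j/M)^2}\in[|t-t_0|,\sqrt{2}\,|t-t_0|]$ and using $|G(t)-G_j(t)|\le (2/M)\,2^{-k-1}$, one sees that the annular condition is implied by $|t-t_0|\in[c_1\,2^{-k},c_2\,2^{-k}]$ for suitable absolute constants $c_1<c_2$ (for instance $c_1\approx 1/(2\sqrt{2})$ and $c_2\approx 2\sqrt{2}$). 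Because $|I'|=2^{-Nj}\ge 2^{-k}$ and $t_0\in I'$, at least one of the one-sided intervals $I'\cap[t_0+c_1\,2^{-k},t_0+c_2\,2^{-k}]$ or $I'\cap[t_0-c_2\,2^{-k},t_0-c_1\,2^{-k}]$ has length $\gtrsim 2^{-k}$, which yields \eqref{eq:lower bound on Gamma in cone}.

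\textbf{Main obstacle.} The subtle point will be the borderline scale $k=Nj$, where the inner annular radius $2^{-k-1}$ equals exactly $|I'|/2$: if $t_0$ happens to lie close to the midpoint of $I'$, the horizontal range $|t-t_0|\le|I'|/2$ only barely achieves $|G(t)-x'|\ge 2^{-k-1}$. One salvages the bound by exploiting that the slope $j/M$ of $g_j$ amplifies Euclidean distances by $\sqrt{1+(j/M)^2}\in[1,\sqrt{2}]$, which is just enough to produce a nontrivial slice on one side of $t_0$; in the worst case one could also discard an $O(1)$ number of boundary scales, since the ultimate summation over $k$ in \eqref{eq:big conical energy of Gamma} only requires a lower bound of the form $\gtrsim N$ and therefore absorbs any such constant loss without difficulty.
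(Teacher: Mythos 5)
Your proof follows essentially the same route as the paper: use that $I'\in\mathcal{G}_j$ forces $G_j(I')$ to lie on the line $x'+L_j$, combine with $|G-G_j|\le 2M^{-1}2^{-N(j+1)}$ (Lemma \ref{lem:Gj approximates G well}) to get membership in the cone, and then identify an annulus $\{|t-t_0|\approx 2^{-k}\}\subset I'$ that $G$ maps into the twice-truncated cone. The paper uses the explicit annulus $2^{-k}\le|t-t_0|\le 2^{-k+1}$, while you keep the constants symbolic; both work.

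Two small issues. First, your suggested constant $c_1\approx 1/(2\sqrt{2})$ is too small: from $|G(t)-x'|\ge |t-t_0|-2M^{-1}2^{-k-1}$ (the only lower bound available when $j/M$ is small) one needs $c_1\ge \tfrac12(1+2/M)$, so $c_1$ should be taken slightly above $1/2$; your "for instance" parenthetical does not work as written. Second, and more substantively, the "slope amplification" rescue in your obstacle paragraph is not sound: the factor $\sqrt{1+(j/M)^2}$ is $\ge 1$ but can be arbitrarily close to $1$ (e.g. $j=1$), so it does not salvage the borderline scales $k\in\{Nj,Nj+1\}$ when $t_0$ sits near the midpoint of $I'$. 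Your \emph{other} fallback — discarding an $O(1)$ number of scales near the endpoints of the range and absorbing the loss in the eventual $\gtrsim N$ bound — is the correct fix, and it is in fact what happens in the paper: Lemma \ref{lem:dyadized lots of Gamma in cone} only invokes \eqref{eq:lower bound on Gamma in cone} at scales shifted by a constant away from $k=Nj$ (its hypothesis $2^{-k}\le 2^{-Nj-3}$ guarantees this), so the borderline case is never actually used. You spotted a genuine soft spot in the lemma as stated that the paper's own proof also glosses over.
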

\begin{proof}
	Let $t\in I'$ satisfy $|G(t)-x'|\ge 2^{-N(j+1)}$. Recall that, since $I'\in\mathcal{G}_j$, the set $G_j(I')$ is a segment parallel to $L_j$. We also know that $x'=G_j(t_0)\in G_j(I')$, and so by \lemref{lem:Gj approximates G well} 
	\begin{equation*}
		\dist(G(t),L_j+x')\le |G(t)-G_j(t)|\le 2M^{-1}\, 2^{-N(j+1)}\le \sin({\alpha}/{8})|G(t)-x'|,
	\end{equation*}
	where we also used that $M=100\lceil \alpha^{-1}\rceil$ and we assume $\alpha$ to be so small that ${\alpha}/8\le 2\sin({\alpha}/{8})$. Thus, $G(t)\in K(x',L_j,\alpha/8)$.
	
	Now, let $k$ be an integer such that $Nj\le k\le N(j+1) - 1$. Let $t\in I'$ be such that $2^{-k}<|t-t_0|$, so that
	\begin{multline*}
		|G(t)-x'|\ge |G_j(t)-G_j(t_0)|-|G(t)-G_j(t)|\ge |t-t_0| - 2M^{-1}\,2^{-N(j+1)}\\
		\ge 2^{-k} - 2M^{-1}\,2^{-N(j+1)}\ge 2^{-N(j+1)}.
	\end{multline*}
	Hence, by our previous result, $G(t)\in K(x',L_j,\alpha/8)$. At the same time, the calculation above shows that $|G(t)-x'|\ge 2^{-k-1}$. Similarly,
	\begin{equation*}
		|G(t)-x'| \le |G_j(t)-G_j(t_0)|+|G(t)-G_j(t)|\le \sqrt{2}|t-t_0| + 2M^{-1}\,2^{-N(j+1)}.
	\end{equation*}
	Hence, for $t\in I'$ such that $2^{-k}\le|t-t_0|\le 2^{-k+1}$ we have 
	\begin{equation*}
		2^{-k-1}\le |G(t)-x'| \le 2^{-k+2}.
	\end{equation*}
	That is, for $t\in I'$ with $2^{-k}\le|t-t_0|\le 2^{-k+1}$ we have
	\begin{equation*}
		G(t)\in K(x',L_j,\alpha/8,2^{-k-1},2^{-k+2}).
	\end{equation*}
	Since $G$ is bilipschitz, \eqref{eq:lower bound on Gamma in cone} follows.
\end{proof}

Later on we will need the following simple lemma about the inclusions of twice truncated cones.
\begin{lemma}\label{lem:truncated cone included in another}
	Let $x_1, x_2\in\R^2$, $L\in G(2,1),\ r>0$ and $\alpha_0\in (0,\pi/50)$. Suppose that $|x_1-x_2|\le \sin(\alpha_0)\,r$. Then
	\begin{equation*}
		K(x_1,L,\alpha_0,\sin(\alpha_0)^{-1}|x_1-x_2|, r) \subset K(x_2,L,8 \alpha_0,2 r).
	\end{equation*} 
\end{lemma}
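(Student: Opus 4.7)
The plan is to unpack the definition of the twice-truncated cone on the left, take an arbitrary point $y$ in it, and verify the two defining conditions of $K(x_2, L, 8\alpha_0, 2r)$: the outer ball condition $|y-x_2|<2r$ and the cone condition $\dist(y, L+x_2) < \sin(8\alpha_0)|y-x_2|$. Write $s := |x_1-x_2|$, so by assumption $s \le \sin(\alpha_0)\, r$, and the hypothesis on $y$ reads $s/\sin(\alpha_0) \le |y-x_1| < r$ together with $\dist(y, L+x_1) < \sin(\alpha_0)|y-x_1|$.

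The ball condition is immediate: $|y-x_2| \le |y-x_1|+s < r + \sin(\alpha_0)\, r < 2r$. For the cone condition, the first move is to exploit that $L+x_2$ is the translate of $L+x_1$ by $x_2-x_1$, so the Hausdorff distance between these parallel lines is at most $s$. Hence
\[
\dist(y, L+x_2) \le \dist(y, L+x_1) + s < \sin(\alpha_0)|y-x_1| + s.
\]
The crucial role of the inner truncation $|y-x_1| \ge s/\sin(\alpha_0)$ is that it lets us absorb the additive $s$-error: it gives $s \le \sin(\alpha_0)|y-x_1|$, so $\dist(y,L+x_2) \le 2\sin(\alpha_0)|y-x_1|$. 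A reverse-triangle step $|y-x_1|\le |y-x_2|+s \le |y-x_2|+\sin(\alpha_0)|y-x_1|$ yields $|y-x_1| \le |y-x_2|/(1-\sin(\alpha_0))$, so
\[
\dist(y, L+x_2) \le \frac{2\sin(\alpha_0)}{1-\sin(\alpha_0)}\,|y-x_2|.
\]

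What remains is the elementary trigonometric check that $\frac{2\sin(\alpha_0)}{1-\sin(\alpha_0)} \le \sin(8\alpha_0)$ for $\alpha_0\in(0,\pi/50)$. This is the only place the specific constants enter, and it is the mildly delicate step (though still routine). Using $\sin(\alpha_0) < \alpha_0 < \pi/50$ gives $\frac{2\sin(\alpha_0)}{1-\sin(\alpha_0)} \le \frac{2\alpha_0}{1-\sin(\pi/50)} \le \frac{15}{7}\alpha_0$, while Jordan's inequality $\sin x \ge 2x/\pi$ on $[0,\pi/2]$ (applicable since $8\alpha_0<\pi/2$) gives $\sin(8\alpha_0) \ge 16\alpha_0/\pi$; the desired bound follows from $15\pi < 7\cdot 16$. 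Combining the estimates yields the cone condition with aperture $8\alpha_0$, finishing the proof. I expect no real obstacle beyond keeping track of constants; the main conceptual point is recognising that the inner truncation radius $\sin(\alpha_0)^{-1}|x_1-x_2|$ is chosen precisely so as to make the parallel-line shift $s$ be dominated by the cone-width term $\sin(\alpha_0)|y-x_1|$.
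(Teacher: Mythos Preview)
Your proof is correct and follows essentially the same route as the paper: take $y$ in the left-hand cone, control $\dist(y,L+x_2)$ via $\dist(y,L+x_1)+s$, use the inner truncation to absorb the additive $s$, and finish with a numerical check. The only cosmetic difference is in the last step: the paper bounds $|y-x_1|\le |y-x_2|+|x_1-x_2|$ and separately shows $|x_1-x_2|\le |y-x_2|$, reducing to the inequality $4\sin(\alpha_0)\le\sin(8\alpha_0)$, whereas you fold the $s$-term back into $|y-x_1|$ to get the factor $1/(1-\sin(\alpha_0))$ and then verify $\tfrac{2\sin(\alpha_0)}{1-\sin(\alpha_0)}\le\sin(8\alpha_0)$ --- both are straightforward on $(0,\pi/50)$.
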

\begin{proof}
	Let $y\in K(x_1,L,\alpha_0,\sin(\alpha_0)^{-1}|x_1-x_2|, r)$, so that $\sin(\alpha_0)^{-1}|x_1-x_2|<|y-x_1|\le r$ and $\dist(y,L+x_1)\le \sin({\alpha_0})|y-x_1|$. It is clear that for any $p\in L+x_1$ we have $\dist(p,L+x_2) = |x_1-x_2|$, and so
	\begin{multline*}
		\dist(y,L+x_2)\le \dist(y,L+x_1) + |x_1-x_2|\le \sin(\alpha_0)|y-x_1| + \sin(\alpha_0) |y-x_1|\\
		\le 2\sin(\alpha_0) |y-x_2| + 2\sin(\alpha_0)|x_1-x_2|.
	\end{multline*}
	It is easy to check that for $\alpha_0\in (0,\pi/50)$ we have $4\sin(\alpha_0)\le\sin(8\alpha_0)$, and so	
	\begin{equation*}
		\dist(y,L+x_2)\le \frac{\sin(8\alpha_0)}{2}(|y-x_2| + |x_1-x_2|).
	\end{equation*}
At the same time, we have
	\begin{equation*}
		|y-x_2|\ge |y-x_1|-|x_1-x_2|\ge (\sin(\alpha_0)^{-1}-1)|x_1-x_2|\ge |x_1-x_2|.
	\end{equation*}
	Putting the two estimates together gives $y\in K(x_2,L,8 \alpha_0)$. To see that $y\in B(x_2, 2r)$, note that $|y-x_2|\le |y-x_1|+|x_1-x_2|\le 2r$.
\end{proof}

Recall that in \eqref{eq:lower bound on Gamma in cone} we showed a lower bound on the length of intersection of $G(I')$ with a cone centered at $x'$. However, to prove \eqref{eq:big conical energy of Gamma} we need information about the intersections with cones centered at $x$. We use \eqref{eq:lower bound on Gamma in cone} and \lemref{lem:truncated cone included in another} to get the following.

\begin{lemma}\label{lem:dyadized lots of Gamma in cone}
	Let $k$ be an integer such that $\alpha^{-1}\,2^{-N(j+1)+9}< 2^{-k}\le 2^{-Nj-3}$. Then, we have
	\begin{equation}\label{eq:dyadized lots of Gamma in cone}
		\Ho (G(I')\cap K(x,L_j,\alpha/2,2^{-k}))\gtrsim 2^{-k},
	\end{equation}
\end{lemma}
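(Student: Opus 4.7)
The plan is to deduce \eqref{eq:dyadized lots of Gamma in cone} from the previous lemma applied at a slightly finer dyadic scale, combined with Lemma \ref{lem:truncated cone included in another}. The point is that $|x - x'| \le 2^{-N(j+1)+1}$ by \eqref{eq:x and x' distance} is tiny compared to the scale $2^{-k}$ (thanks to the lower bound $2^{-k} > \alpha^{-1}\,2^{-N(j+1)+9}$), so a twice-truncated cone centered at $x'$ will fit inside a once-truncated cone centered at $x$ after a controlled enlargement of the aperture.

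Concretely, I would take $k' := k + 3$, so that the previous lemma, applied at $k'$, controls the length of $G(I')$ in $K(x', L_j, \alpha/8, 2^{-k-4}, 2^{-k-1})$. The hypothesis $Nj + 3 \le k$ forces $k' \ge Nj$, and the upper bound on $2^{-k}$ (together with $\alpha \le 1$) forces $k' \le N(j+1) - 1$, which is exactly the range in which the previous lemma applies; it delivers
$$\Ho\big(G(I') \cap K(x', L_j, \alpha/8, 2^{-k-4}, 2^{-k-1})\big) \gtrsim 2^{-k'} \approx 2^{-k}.$$

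Next I would observe that the proof of the previous lemma in fact gives the stronger conclusion with aperture $\alpha/16$ rather than $\alpha/8$: since $M = 100\lceil\alpha^{-1}\rceil$ satisfies $2M^{-1} \le \sin(\alpha/16)$, the estimate $\dist(G(t),L_j+x')\le 2M^{-1}\,2^{-N(j+1)}$ yields $G(t)\in K(x',L_j,\alpha/16)$ as soon as $|G(t)-x'|\ge 2^{-N(j+1)}$, which is guaranteed on the annulus of inner radius $2^{-k-4}\ge 2^{-N(j+1)}$. Hence the same length bound holds with the smaller aperture $\alpha/16$. I would then feed this into Lemma \ref{lem:truncated cone included in another} with $x_1 = x'$, $x_2 = x$, $L = L_j$, $\alpha_0 = \alpha/16$, $r = 2^{-k-1}$, so that $8\alpha_0 = \alpha/2$ and $2r = 2^{-k}$, concluding
$$K(x', L_j, \alpha/16, 2^{-k-4}, 2^{-k-1}) \subset K(x, L_j, \alpha/2, 2^{-k}),$$
and combining this with the length bound gives \eqref{eq:dyadized lots of Gamma in cone}.

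The only real obstacle is bookkeeping rather than ideas: one has to juggle three apertures ($\alpha/2$, $\alpha/8$, $\alpha/16$) and several comparable scales ($|x-x'|$, $2^{-N(j+1)}$, $2^{-k-4}$, $2^{-k-1}$, $2^{-k}$), and verify that the quantitative lower bound $2^{-k} > \alpha^{-1}\,2^{-N(j+1)+9}$ is tight enough for both hypotheses of Lemma \ref{lem:truncated cone included in another} (namely $|x-x'| \le \sin(\alpha/16)\,2^{-k-1}$ and $\sin(\alpha/16)^{-1}|x-x'| \le 2^{-k-4}$) to hold simultaneously, using the crude estimate $\sin(\alpha/16)^{-1} \lesssim \alpha^{-1}$.
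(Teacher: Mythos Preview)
Your argument is correct and follows the same route as the paper: apply the previous lemma at the shifted scale $k'=k+3$ to place length of $G(I')$ in a twice-truncated cone centered at $x'$, then invoke Lemma~\ref{lem:truncated cone included in another} to pass to the cone $K(x,L_j,\alpha/2,2^{-k})$ centered at $x$. The paper applies Lemma~\ref{lem:truncated cone included in another} directly with $\alpha_0=\alpha/8$ and writes output aperture $\alpha/2$ (which, strictly by the lemma as stated, should be $8\alpha_0=\alpha$); your extra step of sharpening the previous lemma to aperture $\alpha/16$ so that $8\alpha_0=\alpha/2$ holds on the nose is a harmless detour that in fact patches this small numerical slip. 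One minor wording issue: it is the \emph{lower} bound $2^{-k}>\alpha^{-1}2^{-N(j+1)+9}$, not the upper bound, that (together with $\alpha\le 1$) forces $k'\le N(j+1)-1$.
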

\begin{proof}		
	First, recall that $x'=G_j(t_0)$ and $|x-x'|\le 2^{-N(j+1)+1}$ by \eqref{eq:x and x' distance}. Using our assumptions on $k$, and that we assume $\alpha$ to be so small that $\sin(\alpha/8)\ge \alpha/16$, we get
	\begin{equation}\label{eq:weird estimate of x x'}
		\sin(\alpha/8)^{-1}|x-x'|\le \alpha^{-1}\,2^{-N(j+1)+5}\le2^{-k-4}< 2^{-k-1}.
	\end{equation}
	Hence, we may apply \lemref{lem:truncated cone included in another} with $x_1=x',\ x_2=x,\ L=L_j,\ \alpha_0 = \alpha/8,\ r=2^{-k-1},$ to get
	\begin{equation*}
		K(x',L_j,\alpha/8,\sin(\alpha/8)^{-1}|x-x'|,2^{-k-1})\subset K(x,L_j,\alpha/2,2^{-k}).
	\end{equation*}
	Since $\sin(\alpha/8)^{-1}|x-x'|\le 2^{-k-4}$ by \eqref{eq:weird estimate of x x'}, it follows from the above that
	\begin{equation}\label{eq:one cone included in another}
		K(x',L_j,\alpha/8,2^{-k-4},2^{-k-1})\subset K(x,L_j,\alpha/2,2^{-k}).
	\end{equation}
	Note that we have $Nj\le k-3\le N(j+1)-1$ due to our assumptions on $k$. Thus, we may use
	\eqref{eq:lower bound on Gamma in cone} to get 
	\begin{equation*}
		\Ho (G(I')\cap K(x,L_j,\alpha/8,2^{-k-4},2^{-k-1}))\gtrsim 2^{-k}.
	\end{equation*}
	Together with \eqref{eq:one cone included in another}, this concludes the proof.
\end{proof} 

We are ready to finish the proof of \lemref{lem:Gamma from EN}.
\begin{proof}[Proof of \ref{it:big energy}]
	We want to show that
	\begin{equation}\label{eq:big energy of Gamma again}
		\int_0^1 \frac{\Ho(K(x,\theta,\alpha,r)\cap \Gamma)}{r} \ \frac{dr}{r}\gtrsim N.
	\end{equation}
	We use \eqref{eq:cone around V contains cone around Lj} to write
	\begin{multline}\label{eq:integrals with cones}
		\int_0^1 \frac{\Ho(K(x,\theta,\alpha,r)\cap \Gamma)}{r} \ \frac{dr}{r}\ge \int_0^1 \frac{\Ho(K(x,L_j,\alpha/2,r)\cap \Gamma)}{r} \ \frac{dr}{r}\\
		\ge \int_{\alpha^{-1}\,2^{-N(j+1)+10}}^{2^{-Nj-3}} \frac{\Ho(K(x,L_j,\alpha/2,r)\cap \Gamma)}{r} \ \frac{dr}{r}.
	\end{multline}
	Note that $\alpha^{-1}\,2^{-N(j+1)+10} < 2^{-Nj-3}$ due to the assumption $N\ge 100(1+\log_2(\alpha^{-1}))$. Now let $\alpha^{-1}\,2^{-N(j+1)+10} \le r< 2^{-Nj-3}$, and let $k$ be the unique integer such that $2^{-k}\le r<2^{-k+1}$. Then, $k$ satisfies the assumptions of \lemref{lem:dyadized lots of Gamma in cone}, and we get
	\begin{equation*}
		\Ho (K(x,L_j,\alpha/2,r)\cap\Gamma)\ge \Ho (K(x,L_j,\alpha/2,2^{-k})\cap\Gamma)\gtrsim 2^{-k}\approx r.
	\end{equation*}
	It follows from \eqref{eq:integrals with cones} and the above that 
	\begin{multline*}
		\int_0^1 \frac{\Ho(K(x,V,\alpha,r)\cap \Gamma)}{r} \ \frac{dr}{r}\gtrsim \int_{\alpha^{-1}\,2^{-N(j+1)+10}}^{2^{-Nj-3}} 1 \ \frac{dr}{r}\\ 
		= \log(2)\,( N(j+1) - 10 - \log_2(\alpha^{-1}) - Nj - 3) \\
		= \log(2)(N - \log_2(\alpha^{-1}) - 13)\ge \frac{N}{100},
	\end{multline*}
	where we used the assumption $N\ge 100(1+\log_2(\alpha^{-1}))$ in the last inequality. Thus, the proof of \eqref{eq:big energy of Gamma again} is finished.
\end{proof}

\section{Example of Joyce and M\"{o}rters}\label{sec:Joyce Morters example}
In this section we will show that the measure $\mu$ constructed in \cite{joyce2000set} satisfies BPBE(2), but does not satisfy BPBE(1). Hence, \thmref{thm:SIO theorem} is a true improvement on its $\mathcal{E}_{\mu,1}$ analogue \cite[Theorem 10.2]{chang2017analytic}.

\subsection{Construction of \texorpdfstring{$\mu$}{mu}}
For reader's convenience, we sketch out the construction of Joyce and M\"{o}rters below.

Let $M\ge 3$ be a large constant, and $1/2<\beta_k<1$ be a sequence of numbers converging to $1$. For $k\ge 1$ we define $m_k = M k$, $m(k) = m_1\dots m_k = M^k\, k!$, and
\begin{equation*}
	\sigma_k = \left(\frac{k+1}{k}\right)^{\beta_k}.
\end{equation*}
We set also $\alpha_j = 2^{-n}\,\pi$ for all $2^n\le j< 2^{n+1},\ n\ge 0$.

We proceed to define a compact set $E\subset\R^2$ on which the measure $\mu$ will be supported. First, let $E_0$ be a closed ball of diameter $1$. We place $m_1$ closed balls of diameter $2r_1:=\sigma_1/m_1$ inside $E_0$. We do it in such a way, that 
\begin{itemize}
	\item their centers lie on the diameter of $E_0$ forming angle $\alpha_1$ with the $x$ axis,
	\item the boundaries of the first and the last ball touch the boundary of $E_0$,
	\item they overlap as little as possible, i.e. the distance between the centers of two neighbouring balls is $(1-\sigma_1/m_1)/(m_1-1)$.
\end{itemize}
We call these balls the \emph{balls of generation 1}, we denote their family by $\mathcal{B}_1$, and we set $E_1=\bigcup_{B\in \cB_1}B$.

Now suppose that $E_k$ has already been defined as a union of balls $\bigcup_{B\in \cB_k}B$, and that $\#\cB_k = m(k)$. Inside every ball $B\in\cB_k$ we place $m_{k+1}$ closed balls of diameter $2r_{k+1}:=\sigma_1\dots\sigma_{k+1}/m(k+1)$. We do it in such a way, that 
\begin{itemize}
	\item their centers lie on the diameter of $B$ forming angle $\sum_{i=1}^{k+1}\alpha_i$ with the $x$ axis,
	\item the boundaries of the first and the last ball touch touch the boundary of $B$,
	\item they overlap as little as possible, i.e. the distance between the centers of two neighbouring balls is
	\begin{equation*}
		d_{k+1} := \frac{\sigma_1\dots\sigma_k}{m(k)}\cdot \frac{1-\sigma_{k+1}/m_{k+1}}{m_{k+1}-1}.
	\end{equation*}
\end{itemize}
The balls defined above are called the \emph{balls of generation (k+1)}, and their family is denoted by $\cB_{k+1}$. Clearly, $\#\cB_{k+1} = m_{k+1}\cdot m(k) = m(k+1)$. We set $E_{k+1} = \bigcup_{B\in \cB_{k+1}}B$, and $E = \bigcap_{k\ge 0} E_k$.

It is shown in \cite[\S 2.1]{joyce2000set} that if $M$ is chosen appropriately, then two balls of generation $(k+1)$ may intersect only if they are contained in the same ball of generation $k$. It follows that there exists a natural probability measure $\mu$ supported on $E$ defined by
\begin{equation}\label{eq:def of mu}
	\mu(B) = m(k)^{-1}\quad \text{for}\ B\in\mathcal{B}_k,\ k\ge 1.
\end{equation}

If the sequence $\beta_k$ is chosen properly, the set $E$ has the following curious property: it is of non-$\sigma$-finite length, but all the projections of $E$ onto lines are of zero length. Moreover, the Menger curvature of $E$ is finite. However, we will not use those properties.
\subsection{BPBE(2) holds} In \cite[\S 2.1]{joyce2000set} Joyce and M\"{o}rters construct a function $\varphi:[0,d_1)\to \R$ satisfying $\varphi(r)<r$ and
\begin{equation*}
	\int_0^{d_1} \frac{\varphi(r)^2}{r^3}\ dr <\infty.
\end{equation*}
They also show that for $0<r<d_1$ the measure $\mu$ satisfies $\mu(B(x,r))\le 84\, \varphi(r)$. It follows easily that $\mu(B(x,r))\le C_1 r$ for $C_1=\max(84, 1/d_1)$ and all $r>0$, and so $\mu$ satisfies \eqref{eq:growth condition}. Furthermore, by the observations above and the fact that $\mu(\R^2)=1$, for all $x\in E=\supp\mu$ we have
\begin{equation}\label{eq:density sqared bdd}
	\int_0^{\infty} \left(\frac{\mu(B(x,r))}{r}\right)^2\ \frac{dr}{r} \lesssim \int_0^{d_1} \frac{\varphi(r)^2}{r^3}\ dr + \int_{d_1}^{\infty}\frac{1}{r^3}\ dr\le M_0
\end{equation}
for some $M_0$ depending only on $d_1$ and $\varphi$. 

Obviously, for any $\theta\in[0,\pi/2),\ \alpha\in (0,1), R>0,$ we have
\begin{equation*}
	\mathcal{E}_{\mu,2}(x,\theta,\alpha,R) = \int_0^{R} \left(\frac{\mu(K(x,\theta,\alpha,r))}{r}\right)^2\ \frac{dr}{r} \le \int_0^{\infty} \left(\frac{\mu(B(x,r))}{r}\right)^2\ \frac{dr}{r},
\end{equation*}
and so the BPBE(2) condition is trivially satisfied.

Let us note that the boundedness of nice singular integral operators on $L^2(\mu)$ for this particular measure $\mu$ is not a new result. It is well known that measures satisfying \eqref{eq:density sqared bdd} behave well with respect to SIOs. For example, one can use \eqref{eq:density sqared bdd} and \cite[Theorem 2.2]{mattila1996analytic} to prove local curvature condition for $\mu$, and then boundedness of Cauchy transform follows from \cite[Theorem 1.1]{tolsa1999}.

\subsection{\texorpdfstring{$\mathcal{E}_{\mu,1}$}{E\_\{mu,1\}} is not bounded}
Let $x\in E,\ \theta\in[0,\pi/2),$ and $\alpha\in (0,\pi/100)$ be given. We will show that
\begin{equation}\label{eq:E1 infinite}
	\mathcal{E}_{\mu,1}(x,\theta,\alpha,1) = \int_0^1 \frac{\mu(K(x,\theta,\alpha,r))}{r}\ \frac{dr}{r} = \infty.
\end{equation}

\begin{definition}
	We will say that an integer $k$ is a \emph{good index} if
	\begin{equation}\label{eq:good index 2}
		\bigg\lvert\big(\sum_{j=1}^k\alpha_j - N\pi\big) - \theta\bigg\rvert \le \frac{\alpha}{16},
	\end{equation}
	where $N$ is the integer satisfying $2^N\le k<2^{N+1}$ .
	By the definition of $\alpha_j$, this is equivalent to
	\begin{equation}\label{eq:good index}
		\big\lvert(k-2^N+1)\frac{\pi}{2^N} - \theta\big\rvert\le \frac{\alpha}{16}.
	\end{equation}
\end{definition}
Our strategy is the following: first, we show that there are many good indices. Then, we prove that if $k$ is a good index, then ${\mu(K(x,\theta,\alpha,2\,r_k))}{r_k^{-1}}$ is large. Put together, the two facts will imply \eqref{eq:E1 infinite}.

We define $N_0=N_0(\alpha)$ to be a large integer, to be fixed in Lemmas \ref{lem:many good indices} and \ref{lem:measure bigger than Bk}.
\begin{lemma}\label{lem:many good indices}
	If $N_0=N_0(\alpha)$ is large enough, then for all $N>N_0$ we have a large portion of good indices satisfying $2^N\le k< 2^{N+1}$, that is,
	\begin{equation*}
		{\#\{2^N\le k< 2^{N+1}\ :\ k\ \text{is a good index}\}}\gtrsim 2^N\alpha.
	\end{equation*}
\end{lemma}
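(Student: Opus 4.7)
The plan is to reduce the lemma to an elementary counting argument: count the integers $j \in \{1, \dots, 2^N\}$ for which $j\pi/2^N$ lies within distance $\alpha/16$ of $\theta$.

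First, I would make the change of variables $j = k - 2^N + 1$, so that $k$ ranges over $\{2^N, \dots, 2^{N+1}-1\}$ precisely when $j$ ranges over $\{1, \dots, 2^N\}$. The good-index condition \eqref{eq:good index} becomes
\begin{equation*}
	\left\lvert \frac{j\pi}{2^N} - \theta \right\rvert \le \frac{\alpha}{16}.
\end{equation*}
Thus the problem reduces to counting how many of the $2^N$ equally-spaced points $\{\pi/2^N,\, 2\pi/2^N,\, \dots,\, \pi\}$ (spacing $\pi/2^N$) fall into the interval $J := [\theta - \alpha/16,\, \theta + \alpha/16]$.

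Next, I would note that because $\theta \in [0,\pi/2)$ and $\alpha < \pi/100$, the intersection $J \cap (0, \pi]$ is an interval of length at least $\alpha/16$ (the worst case is when $\theta$ is near $0$, in which case at least the portion $[0, \alpha/16]$ remains). Standard counting then says that an interval of length $\ell$ contains at least $\lfloor \ell\, 2^N/\pi \rfloor$ points of any arithmetic progression of spacing $\pi/2^N$; applied with $\ell \ge \alpha/16$ this yields
\begin{equation*}
	\#\{2^N \le k < 2^{N+1} : k\ \text{is good}\} \ge \left\lfloor \frac{2^N \alpha}{16\pi}\right\rfloor \ge \frac{2^N \alpha}{16\pi} - 1.
\end{equation*}

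Finally, choosing $N_0 = N_0(\alpha)$ large enough so that $2^{N_0}\alpha \ge 32\pi$ ensures that for every $N > N_0$ the right-hand side above is at least $2^N\alpha/(32\pi) \gtrsim 2^N\alpha$, which is the desired bound. There is no real obstacle here; the only subtlety is the boundary case $\theta$ close to $0$, which is handled by observing that even then $J \cap (0,\pi]$ retains length $\gtrsim \alpha$, and this is absorbed into the threshold $N_0(\alpha)$.
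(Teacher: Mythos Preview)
Your proposal is correct and follows essentially the same approach as the paper: both reduce the question to counting points of the arithmetic progression $\{j\pi/2^N\}$ lying within distance $\sim\alpha$ of $\theta$, and both absorb the boundary issue (when $\theta$ is near $0$) into the choice of $N_0(\alpha)$. The paper phrases the count slightly differently---it first picks the index $k_0$ minimizing $|(k_0-2^N+1)\pi/2^N-\theta|$ and then counts integers $k$ with $|k-k_0|\pi/2^N\le\alpha/50$---but this is equivalent to your direct interval count.
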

\begin{proof}
	Let $N_0$ be so big that $2^{-N_0}\pi < \alpha/100$, and let $N>N_0$. Let $2^N\le k_0< 2^{N+1}$ be the index minimizing $\lvert(k_0-2^N+1)\pi 2^{-N} - \theta\big\rvert$. It is clear that
	\begin{equation*}
		\lvert(k_0-2^N+1) 2^{-N}\pi - \theta\big\rvert\le 2^{-N}\pi,
	\end{equation*}
	and so it follows from \eqref{eq:good index} that all integers $k$ such that $2^N\le k< 2^{N+1}$ and $|(k-k_0) 2^{-N}\pi| \le \alpha/50$ are good indices. It is easy to see that there are at least $C 2^N\alpha$ such integers, where $C$ is some absolute constant.
\end{proof}
Recall that $r_k$ was the radius of balls of $k$-th generation, and $x\in E$ is arbitrary. For $k\ge 1$ let $B_{k}\in\mathcal{B}_k$ be a ball of generation $k$ containing $x$ (there may be two such balls, in which case we
just choose one).
\begin{lemma}\label{lem:measure bigger than Bk}
	If $N_0=N_0(\alpha)$ is large enough, then for all good indices $k\ge 2^{N_0}$ we have
	\begin{equation*}
		{\mu(K(x,\theta,\alpha,2\,r_k))} \gtrsim \mu(B_k).
	\end{equation*}
\end{lemma}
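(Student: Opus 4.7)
The plan is to capture mass in the cone by showing that most of the generation-$(k+1)$ children of $B_k$ lie inside $K(x,\theta,\alpha,2r_k)$. These children are placed along a diameter of $B_k$ in direction $\phi_{k+1}:=\sum_{i=1}^{k+1}\alpha_i$; the good-index condition combined with $k$ large will force the line $V_{\phi_{k+1}}$ to be close to $V_\theta$. One preliminary remark: since $x\in B_k$ and $\diam(B_k)=2r_k$, we have $B_k\subset B(x,2r_k)$, so the truncation at $2r_k$ is automatic, and it suffices to analyze $K(x,\theta,\alpha)\cap B_k$.

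I would choose $N_0=N_0(\alpha)$ so that for $k\ge 2^{N_0}$ the following three conditions hold simultaneously: (a) $\alpha_{k+1}\le \alpha/100$, which upgrades the good-index bound $\measuredangle(V_{\phi_k},V_\theta)\le\alpha/16$ to $\measuredangle(V_{\phi_{k+1}},V_\theta)\le \alpha/8$; (b) $\sigma_{k+1}$ is close enough to $1$ that the sibling spacing satisfies $d_{k+1}\ge \tfrac{3}{2}\,r_{k+1}$ (using $d_{k+1}=\tfrac{2r_{k+1}(m_{k+1}-\sigma_{k+1})}{\sigma_{k+1}(m_{k+1}-1)}$); and (c) $m_{k+1}=M(k+1)$ is large compared to $1/\alpha$. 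Now label the children as $B_{k+1}^{(j)}$, $j=1,\dots,m_{k+1}$, with centers $c^{(j)}=c_k+(j-j_\ast)\,d_{k+1}\,v$ for a unit vector $v$ of angle $\phi_{k+1}$, and let $j_0$ be the index with $x\in B_{k+1}^{(j_0)}$, so $|x-c^{(j_0)}|\le r_{k+1}$. The heart of the argument is the angle estimate: for any $y\in B_{k+1}^{(j)}$, writing $y-x=(j-j_0)d_{k+1}v+\eta$ with $|\eta|\le 2r_{k+1}$, one finds
\[
\sin\measuredangle(y-x,V_{\phi_{k+1}})\le \frac{2r_{k+1}}{|j-j_0|d_{k+1}-2r_{k+1}},
\]
which by (b) falls below $\sin(\alpha/2)$ once $|j-j_0|\ge C_0/\alpha$ for an absolute $C_0$. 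Combined with (a), this places the entire ball $B_{k+1}^{(j)}$ inside $K(x,\theta,\alpha)$, hence inside $K(x,\theta,\alpha,2r_k)$ by the preliminary observation.

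Finally, by (c) at least $m_{k+1}/2$ of the indices $j\in\{1,\dots,m_{k+1}\}$ satisfy $|j-j_0|\ge C_0/\alpha$. Since $\mu(B_{k+1}^{(j)})=1/m(k+1)$ and the identity $\sum_j\mu(B_{k+1}^{(j)})=m_{k+1}/m(k+1)=\mu(B_k)$ forces the sibling overlaps to have $\mu$-measure zero, the individual child masses add over any subfamily, and we conclude
\[
\mu(K(x,\theta,\alpha,2r_k))\ge \frac{m_{k+1}}{2}\cdot\frac{1}{m(k+1)}=\tfrac{1}{2}\,\mu(B_k).
\]
The main obstacle is arranging (a)--(c) simultaneously through a single $N_0(\alpha)$ while keeping the angle estimate sharp enough to retain a uniformly large fraction of the $m_{k+1}$ siblings; each of these reduces to $k$ being sufficiently large in terms of $\log_2(1/\alpha)$, which is where $N_0=N_0(\alpha)$ is finally pinned down.
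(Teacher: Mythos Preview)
Your proof is correct and follows the same overall strategy as the paper: both show that a constant fraction of the generation-$(k+1)$ children of $B_k$ lie inside $K(x,\theta,\alpha,2r_k)$, and then sum their masses. The implementations differ. The paper first recenters from $x$ to the center $y$ of $B_{k+1}$ via the cone-inclusion Lemma~\ref{lem:truncated cone included in another}, and then argues that children lying in the annulus $B(y,r_k)\setminus B(y,r_k/2)$ fall into a narrower cone at $y$ aligned with the diameter of $B_k$. You instead work directly from $x$, decomposing $y-x=(j-j_0)d_{k+1}v+\eta$ and bounding $\sin\measuredangle(y-x,V_{\phi_{k+1}})$ explicitly; this is more elementary and avoids invoking the auxiliary lemma. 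You also correctly take the alignment direction to be $\phi_{k+1}=\sum_{i\le k+1}\alpha_i$ (the angle of the diameter along which the children are placed) and absorb the extra $\alpha_{k+1}$ via condition~(a); the paper appears to use $\phi_k$ there, a harmless slip since $\alpha_{k+1}$ is negligible for large $k$. One minor simplification available to you in the last step: instead of arguing that sibling overlaps have $\mu$-measure zero, you can bound $\mu(K(x,\theta,\alpha,2r_k))\ge \mu(B_k)-\sum_{j\,\text{bad}}\mu(B_{k+1}^{(j)})\ge \mu(B_k)/2$ directly.
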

\begin{proof}
	Let $y$ be the center of $B_{k+1}$, so that $|x-y|\le r_{k+1}$. By construction,
	\begin{equation}\label{eq:comparing rk 2}
		r_{k+1} = r_k\,\sigma_{k+1} (Mk)^{-1}\le r_k\, k^{-1}.
	\end{equation}
	Since $k\ge 2^{N_0}$, for $N_0$ big enough we get 
	\begin{equation}\label{eq:comparing rk}
		|x-y|\le r_{k+1}\le \frac{\sin({\alpha}/{50})r_k}{2}.
	\end{equation}
	Then, it follows from \lemref{lem:truncated cone included in another} that
	\begin{equation*}
		K(y,\theta,\alpha/8, \sin(\alpha/8)^{-1}|x-y|,r_k)\subset K(x,\theta,\alpha,2\, r_k).
	\end{equation*}
	Since $\sin({\alpha}/{8})^{-1}|x-y|\le r_k/2$ by \eqref{eq:comparing rk}, we get
	\begin{equation}\label{eq:cone inclusion 1}
		K(y,\theta,\alpha/8, r_k/2, r_k)\subset K(x,\theta,\alpha,2\, r_k).
	\end{equation}
	On the other hand, using the definition of good index \eqref{eq:good index 2} we arrive at
	\begin{equation}\label{eq:cone inclusion 2}
		K(y,\textstyle\sum_{j=1}^k\alpha_j - N\pi,\alpha/50, r_k/2, r_k)\subset K(y,\theta,\alpha/8, r_k/2, r_k).
	\end{equation}
	For brevity, set $\mathsf{K}$ to be the cone from the left hand side above, and let $L$ be the axis of $\mathsf{K}$. Recall that the diameter of $B_k$ (let us call it $D$) forms angle $\sum_{j=1}^k\alpha_j - N\pi$ with the $x$ axis; that is, $D$ is parallel to $L$. Since $y$ is the center of $B_{k+1}$, it follows from the construction of $E$ that $y\in D$. Hence, $D\subset L$.
	
	We claim that the balls of generation $(k+1)$ contained in $B_k\cap B(y,r_k)\setminus B(y,r_k/2)$, are in fact contained in $\mathsf{K}$. Indeed, suppose $z$ belongs to such ball, so that
	\begin{equation*}
		\dist(z,L)=\dist(z,D)\le r_{k+1} \overset{\eqref{eq:comparing rk}}{\le} \frac{\sin({\alpha}/{50})r_k}{2}r_k\le \sin({\alpha}/{50})|z-y|.
	\end{equation*}
	Thus, $z\in\mathsf{K}$. 
	
	Since $y\in D$ and $B_k$ is a ball of radius $r_k$, it follows that a large portion of balls of generation $(k+1)$ contained in $B_k$ is also contained in $B(y,r_k)\setminus B(y,r_k/2)$. That is, they are of the type considered above. Hence,
	\begin{equation*}
		\mu(\mathsf{K})\gtrsim \mu(B_k).
	\end{equation*}
	By \eqref{eq:cone inclusion 2} and \eqref{eq:cone inclusion 1} we have $\mathsf{K}\subset K(x,\theta,\alpha,2\, r_k)$, and so the proof is finished.
\end{proof}
\begin{lemma}\label{lem:large measure of Bk}
	For $k\ge 2$
	\begin{equation*}
		\frac{\mu(B_k)}{2\, r_k}\gtrsim \frac{1}{k}.
	\end{equation*}
\end{lemma}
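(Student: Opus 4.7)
The plan is to simply unfold the definitions and observe a telescoping cancellation. By construction $\mu(B_k) = 1/m(k)$ and $2 r_k = \sigma_1\cdots\sigma_k/m(k)$, so
\begin{equation*}
\frac{\mu(B_k)}{2r_k} = \frac{1}{\sigma_1\cdots\sigma_k}.
\end{equation*}
Thus the entire lemma reduces to proving the upper bound $\sigma_1\cdots\sigma_k \lesssim k$.

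For this upper bound, recall $\sigma_j = ((j+1)/j)^{\beta_j}$ with $\beta_j \in (1/2,1)$. Since $(j+1)/j > 1$ and $\beta_j < 1$, we have $\sigma_j \le (j+1)/j$, and therefore the product telescopes:
\begin{equation*}
\sigma_1\cdots\sigma_k \le \prod_{j=1}^{k}\frac{j+1}{j} = k+1 \le 2k.
\end{equation*}
Combining the two displays yields $\mu(B_k)/(2r_k) \ge 1/(2k) \gtrsim 1/k$, as required.

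There is no real obstacle here; the only point to be careful about is the exponent condition $\beta_j < 1$, which is exactly what makes the bound $\sigma_j \le (j+1)/j$ hold (if $\beta_j$ were allowed to exceed $1$, the product could grow like $k^{\beta}$ with $\beta > 1$ and the lemma would fail). The statement can therefore be dispatched in two or three lines.
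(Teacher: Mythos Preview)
Your proof is correct and is essentially identical to the paper's own argument: both reduce $\mu(B_k)/(2r_k)$ to $1/(\sigma_1\cdots\sigma_k)$ and then use $\beta_j<1$ to bound $\sigma_j\le (j+1)/j$, obtaining a telescoping product equal to $k+1$.
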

\begin{proof}
	By the definition of $\mu$ \eqref{eq:def of mu}, $r_k,$ and $\sigma_k$ we have 
	\begin{multline*}
		\frac{\mu(B_k)}{2\, r_k} = m(k)^{-1}\,\frac{m(k)}{\sigma_1\dots\sigma_{k}} = \frac{1}{\sigma_1\dots\sigma_{k}} =
		\left(\frac{1}{2}\right)^{\beta_1}\dots 
		\bigg(\frac{k}{k+1}\bigg)^{\beta_k} \\
		\ge \frac{1}{2}\dots \frac{k}{k+1} = \frac{1}{k+1},
	\end{multline*}
	where in the last inequality we used the fact that $1/2< \beta_k< 1$.
\end{proof}
We are ready to finish the proof of the estimate \eqref{eq:E1 infinite}.
\begin{proof}[Proof of \eqref{eq:E1 infinite}]
	Observe that if $k>{N_0}$ is a good index, then by \lemref{lem:measure bigger than Bk} and \lemref{lem:large measure of Bk} for $r\in (2\,r_k, 4\,r_k)$
	\begin{equation*}
		\frac{\mu(K(x,\theta,\alpha,r))}{r}\gtrsim \frac{1}{k},
	\end{equation*}
	and so
	\begin{equation}\label{eq:big integral}
		\int_{2\,r_k}^{4\,r_k}\frac{\mu(K(x,\theta,\alpha,r))}{r} \frac{dr}{r} \gtrsim \frac{1}{k}.
	\end{equation}
	Recall that $r_{k+1}\le k^{-1}\,r_k$ by \eqref{eq:comparing rk 2}. Hence,
	\begin{multline*}
		\int_0^1 \frac{\mu(K(x,\theta,\alpha,r))}{r}\ \frac{dr}{r}\ge \sum_{k\ge 2^{N_0}}\int_{2\,r_k}^{4\,r_k} \frac{\mu(K(x,\theta,\alpha,r))}{r}\ \frac{dr}{r}\\
		\ge \sum_{N=N_0}^{\infty}\ \sum_{\substack{2^{N}\le k <2^{N+1}\\ k\, \text{is good}}}\int_{2\,r_k}^{4\,r_k} \frac{\mu(K(x,\theta,\alpha,r))}{r}\ \frac{dr}{r} \overset{\eqref{eq:big integral}}{\gtrsim}\sum_{N=N_0}^{\infty}\ \sum_{\substack{2^{N}\le k <2^{N+1}\\ k\, \text{is good}}} \frac{1}{k}\\
		\approx \sum_{N=N_0}^{\infty}\ \sum_{\substack{2^{N}\le k <2^{N+1}\\ k\, \text{is good}}} 2^{-N} \overset{\lemref{lem:many good indices}}{\gtrsim} \sum_{N=N_0}^{\infty} 2^{-N}2^N\alpha = \infty.
	\end{multline*}
\end{proof}

\end{document}